\declaretheoremstyle[
headfont=\color{blue}\normalfont\bfseries,
bodyfont=\color{blue}\normalfont\itshape,
]{colored}
\theoremstyle{plain}
\newtheorem{theorem}{Theorem}[section]
\newtheorem{cor}[theorem]{Corollary}
\newtheorem{prop}[theorem]{Proposition}
\newtheorem{lemma}[theorem]{Lemma}
\theoremstyle{definition}
\newtheorem{example}[theorem]{Example}
\newtheorem{rem}[theorem]{Remark}
\newtheorem{remark}[theorem]{Remark}
\newcommand{\R}{\mathbb{R}}
\newcommand{\N}{\mathbb{N}}
\newcommand{\C}{\mathbb{C}}
\newcommand{\Lin}{\mathcal{L}}
\newcommand{\eps}{\varepsilon}
\DeclareMathOperator{\spann}{span}
\DeclareMathOperator{\sym}{Sym}
\renewcommand{\leq}{\leqslant}
\renewcommand{\geq}{\geqslant}
\title{Group Invariant Separating Polynomials on a Banach space}
\author[J. Falc\'{o}]{Javier Falc\'{o}}
\address[Javier Falc\'{o}]{Departamento de An\'{a}lisis Matem\'{a}tico,
	Universidad de Valencia, Doctor Moliner 50, 46100 Burjasot (Valencia), Spain} \email{francisco.j.falco@uv.es}
\author[D. Garc\'{\i}a]{Domingo Garc\'{\i}a}
\address[Domingo Garc\'{\i}a]{Departamento de An\'{a}lisis Matem\'{a}tico,
	Universidad de Valencia, Doctor Moliner 50, 46100 Burjasot (Valencia), Spain}
\email{domingo.garcia@uv.es}
\author[M. Jung]{Mingu Jung}
\address[Mingu Jung]{Department of Mathematics, POSTECH, Pohang 790-784, Republic of Korea \newline
	\href{http://orcid.org/0000-0003-2240-2855}{ORCID: \texttt{0000-0003-2240-2855} }}
\email{\texttt{jmingoo@postech.ac.kr}}
\author[M. Maestre]{Manuel Maestre}
\address[Manuel Maestre]{Departamento de An\'{a}lisis Matem\'{a}tico,
	Universidad de Valencia, Doctor Moliner 50, 46100 Burjasot
	(Valencia), Spain} \email{manuel.maestre@uv.es}
\thanks{The third author was supported by the Basic Science Research Program through the National Research Foundation of Korea (NRF) funded by the Ministry of Education (NRF-2019R1A2C1003857).  The first, second and fourth authors were supported by MINECO and FEDER Project MTM2017-83262-C2-1-P. The second and fourth authors were also supported by Prometeo PROMETEO/2017/102.}
\date{\today}
\keywords{Group invariant, separation theorem, polynomials, Banach space}
\subjclass[2010]{14L24, 46G20}
\begin{document}
	
	\begin{abstract}
	We study the group invariant continuous polynomials on a Banach space $X$ that separate a given set $K$ in $X$ and a point $z$ outside $K$. We show that if $X$ is a real Banach space, $G$ is a compact group of $\Lin (X)$, $K$ is a $G$-invariant set in $X$, and $z$ is a point outside $K$ that can be separated from $K$ by a continuous polynomial $Q$, then $z$ can also be separated from $K$ by a $G$-invariant continuous polynomial $P$. It turns out that this result does not hold when $X$ is a complex Banach space, so we present some additional conditions to get analogous results for the complex case. We also obtain separation theorems under the assumption that $X$ has a Schauder basis which give  applications to several classical groups. In this case, we obtain characterizations of points which can be separated by a group invariant polynomial from the closed unit ball.  
	\end{abstract}
	
	\maketitle
	
	\section{Introduction}
	The algebras of polynomials invariant under the action of a topological group have been intensively studied during the last years, see for instance \cite{AAGZ,AFGM,CGZ,CGZ2,CGZ3,GGJ,JZ, KVZ, NS} and the references therein. Our aim in this note is to study separation theorems by polynomial that are invariant under a group action. This continues the work started in \cite{AFM} where the authors prove separation theorems that are invariant under the action of a finite group.
	
	A separately continuous action of a topological group $(G, \tau)$ on a topological space $X$ is a group action of $G$ on $X$ such that the corresponding function $G\times X\mapsto X$ defined by $(g,x)\mapsto g\cdot x$ is separately continuous. For a Banach space $X$, $\Lin(X)$ stands for the space of linear and continuous mappings from $X$ into itself. In our setting we will consider $G\subset \Lin(X)$ where $G$ can be endowed with a topology $\tau$ that could differ from the natural topology of $\Lin(X)$. A group that we will consider several times in this note is the group of permutations of $n$ elements, $\sym(\{1,\ldots, n\})$. This group induces naturally a group $G = \{T_{\sigma}: T_{\sigma}(x_1, \ldots, x_{n} ) = (x_{\sigma(1)}, \ldots ,x_{\sigma(n)} ) : \sigma \in \sym(\{1,\ldots, n\}) \}\subset \Lin (\R^n)$ (or $\Lin (\C^n)$).
	
	Given a group $G$ of $\Lin (X)$, we say that a polynomial $P$ on a Banach space $X$ is an \emph{invariant polynomial under the action of $G$} or a \emph{$G$-invariant polynomial} if $P(z) = P(\gamma(z))$ for all $z \in X$ and $\gamma \in G$. Also, we say that a polynomial $P$ on a Banach space $X$ \emph{separates a point $z$ and a set $K$ in $X$} if 
	\[
	\sup_{w \in K} |P(w)| < |P(z)|.
	\]	
	For details on polynomials and holomorphic functions on Banach spaces, see \cite{MaGarSeDe,Dineen}.
	
	Recall that the classical Hahn-Banach separation theorem provides a separation theorem by polynomials of degree one, that is, if $K$ is a nonempty closed convex balanced set in $X$ and $z$ is an element in $X \setminus K$, then there exists a linear functional $f$ on $X$ that separates $z$ and $K$.	
	 R. M. Aron et al \cite[Theorem 2.3]{AFM} showed that when $X$ is the $n$-dimensional space $\C^n$ and $G$ is a finite group of $\Lin (\C^n )$, if $K$ is a subset of $\C^n$ which is invariant under the action of $G$ and $z$ is an element in $\C^n \setminus K$ that is separated from $K$ by a polynomial $Q$, then there exists a $G$-invariant polynomial $P$ that separates $z$ and $K$. The proof makes use of a `symmetrization' $P$ of the polynomial $Q$ on $\C^n$. That is, the polynomial $P$ defined by 
	 \begin{equation}\label{def:symmetrization:finite_dimension}
	 P(w) = \sum_{\gamma \in G} Q(\gamma(w)) \quad (w \in \C^n)
	 \end{equation}
	 that is a $G$-invariant polynomial. 
	As a direct corollary it is obtained that if $K$ is a $G$-invariant closed convex balanced subset of $\C^n$ and $z$ is an element in $\C^n \setminus K$, then there exists a homogeneous $G$-invariant polynomial $P$ that separates $z$ and $K$.
	
	Coming back to general Banach spaces $X$, let $U$ be an open subset of $X$ and denote by $\mathcal{H} (U)$ an algebra of holomorphic functions defined on $U$. Suppose that $G$ is a group of $\Lin (X)$ leaving $U$ fixed, i.e., $\gamma (U) \subset U$ for every $\gamma \in G$. Then we can consider the subalgebra $\mathcal{H}_{G} (U)$ of $\mathcal{H}(U)$ that consists of the $G$-invariant holomorphic functions:
	\[
	\mathcal{H}_{G} (U) = \{ f \in \mathcal{H} (U) : h \circ \gamma = h \,\, \text{for every} \,\, \gamma \in G\}. 
	\]
	Algebras of $G$-invariant holomorphic functions are studied in \cite{AAGZ, AGPZ, CGZ}. For a compact group $G$ of $\Lin (X)$, it is shown \cite[Theorem 2.3]{AGPZ} that the mapping $S_G : \mathcal{H} (U) \rightarrow \mathcal{H}_{G} (U)$ defined as
	\begin{equation}\label{def:symmetrization:infinite_dimension}
	S_G (f) (u) = \int_G (f \circ \gamma) (u) \, d\mu_G (\gamma) \quad (u \in U), 
	\end{equation}
	where $\mu_G$ is the normalized Haar measure on $G$, is a continuous linear projection. Notice that, when $X = \C^n$ and $G$ is a finite group, then the polynomials $P$ in \eqref{def:symmetrization:finite_dimension} and $S_G(Q)$ from \eqref{def:symmetrization:infinite_dimension} coincide.
We will use variants of this symmetrization operator to obtain several separation results.	
	
	The structure of this paper is as follows. In Sections \ref{sec:real} and \ref{sec:complex} we consider the scenario of a compact topological group $G$. In Section \ref{sec:real} we show the generalisation of \cite[Theorem 2.3]{AFM} in the case of real Banach spaces: if $X$ is a real Banach space and a point can be separated from a $G$-invariant set in $X$ by a continuous polynomial on $X$, then this point can also be separated from the set by a $G$-invariant continuous polynomial on $X$. Section \ref{sec:complex} is devoted to the study of the previous result when the underlying space is a complex space, in which case the result does not always hold. We present a natural condition on the orbit induced from a given compact topological group $G$ and a continuous polynomial $Q$ in order to get the separation theorem for complex Banach spaces. 
	 We also consider the case in which the underlying space has a Schauder basis. The existence of such a basis allows us to obtain a separation theorem under natural conditions, see Theorem \ref{separation_schauder} Corollary \ref{S-separation-from-the-ball} and Corollary \ref{R-separation-from-the-ball}. In the last section we study some classical and new examples of topological groups acting on Banach spaces and we characterize points which can be separated by a group invariant polynomial from the closed unit ball in specific situations.

	\section{Real case}
	
This section is devoted to study the case when $X$ is a real Banach space and $G$ is a compact topological group. In this scenario we can obtain the following general result.	
	\label{sec:real}
	\begin{theorem}
	\label{Thrm:real-case}
		Let $(G,\tau)$ be a compact topological group of $\Lin (X)$ that acts separately continuously on $X$ and $K$ a set in $X$ that is invariant under the action of $G$. If $z$ is an element in $X \setminus K$ that can be separated from $K$ by a continuous polynomial $Q$, then there exists a $G$-invariant continuous polynomial $P$ that separates $z$ and $K$. Furthermore, if $Q$ is homogeneous, then $P$ can be chosen to be homogeneous. 
	\end{theorem}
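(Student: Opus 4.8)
The plan is to obtain $P$ as a symmetrization of a high even power of $Q$. Since $X$ is a real Banach space, $Q$ is real-valued, so for every $n\in\N$ the power $Q^{2n}$ is a nonnegative continuous polynomial on $X$, and it is $2nm$-homogeneous whenever $Q$ is $m$-homogeneous. Writing $\mu_G$ for the normalized Haar measure on $G$, I would set, for each $n\in\N$,
\[
P_n(u)\;=\;S_G\!\left(Q^{2n}\right)(u)\;=\;\int_G Q(\gamma(u))^{2n}\,d\mu_G(\gamma)\qquad(u\in X).
\]
The integrand is continuous in $\gamma$ because the action is separately continuous and $Q$ is continuous, so the integral makes sense; by \eqref{def:symmetrization:infinite_dimension} the function $P_n$ is $G$-invariant, and one checks directly from the formula (using linearity of each $\gamma$) that $S_G$ sends continuous polynomials to continuous polynomials and continuous $k$-homogeneous polynomials to continuous $k$-homogeneous ones. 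Hence $P_n$ is a $G$-invariant continuous polynomial, homogeneous when $Q$ is.

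The crucial point --- and the place where ``$X$ real'' is used --- is that a single symmetrization $S_G(Q)$ need not separate $z$ from $K$, since averaging $Q(\gamma(z))$ over the orbit of $z$ may produce cancellation (already for the two-element group $\{\pm\mathrm{Id}\}$ with $Q$ linear). Passing to the even power $Q^{2n}$ removes all cancellation, and raising the power amplifies the value of $Q$ at $z$ relative to its size on $K$. Quantitatively, put $M=\sup_{w\in K}|Q(w)|$ and $a=|Q(z)|$, so $M<a$ by hypothesis and $a>0$; fix $M'$ with $M<M'<a$. On $K$: by $G$-invariance, $\gamma(w)\in K$ for every $w\in K$ and $\gamma\in G$, hence $0\le P_n(w)\le M^{2n}$, so $\sup_{w\in K}|P_n(w)|\le M^{2n}$. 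At $z$: the set $A=\{\gamma\in G:\ |Q(\gamma(z))|>M'\}$ is open (by continuity of $\gamma\mapsto Q(\gamma(z))$) and contains the identity operator, so $\mu_G(A)>0$, and therefore
\[
P_n(z)\;=\;\int_G Q(\gamma(z))^{2n}\,d\mu_G(\gamma)\;\ge\;\int_A Q(\gamma(z))^{2n}\,d\mu_G(\gamma)\;\ge\;(M')^{2n}\,\mu_G(A).
\]

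To finish, I would choose $n$ large enough that $(M/M')^{2n}<\mu_G(A)$, which is possible since $M/M'<1$; then
\[
\sup_{w\in K}|P_n(w)|\;\le\;M^{2n}\;<\;(M')^{2n}\mu_G(A)\;\le\;P_n(z),
\]
and in particular $P_n(z)>0$, so $|P_n(z)|=P_n(z)>\sup_{w\in K}|P_n(w)|$. Thus $P:=P_n$ is a $G$-invariant continuous polynomial separating $z$ and $K$, homogeneous whenever $Q$ is. I expect the main obstacle to be exactly the point isolated above --- recognizing that one must symmetrize an even power, and a sufficiently high one, rather than $Q$ itself; the remaining items (that $S_G$ preserves homogeneous polynomials, positivity of Haar measure on nonempty open sets, and the elementary power estimate) are routine.
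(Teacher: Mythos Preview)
Your proposal is correct and follows essentially the same approach as the paper's proof: both symmetrize an even power $Q^{2n}$ via the Haar integral, use nonnegativity of the real integrand to get the upper bound $M^{2n}$ on $K$, and bound $P_n(z)$ from below using the positive Haar measure of an open neighborhood of the identity on which $|Q(\gamma(z))|$ stays large. The only cosmetic difference is that the paper first normalizes so that $\sup_{w\in K}|Q(w)|\le r<1<|Q(z)|$, whereas you keep track of $M<M'<a$ directly and choose $n$ so that $(M/M')^{2n}<\mu_G(A)$; the underlying estimates are the same.
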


	\begin{proof}
		Without loss of generality, we may assume that 
		\[
		\sup_{w\in K} |Q(w)| \leq r < 1 \quad \text{and} \quad |Q(z)| > 1.
		\]
		As the function $\gamma \in G \mapsto |(Q\circ \gamma )(z)| \in [0,+\infty)$ is continuous, we choose an open neighborhood $V_z$ of $Id_X$ in $G$ such that $|(Q\circ \gamma)(z)| > 1$ for every $\gamma \in V_z$. 
		
		For each $m \in \N$, consider 
		\begin{equation}\label{mth_symmetrization}
		P_m (w) = \int_G [(Q\circ \gamma ) (w)]^{m} \, d\mu (\gamma) \quad (w \in X),
		\end{equation}
		where $d\mu$ is the Haar measure on the compact group $G$. Note that $P_m$ is $mk$-homogeneous when $Q$ is $k$-homogeneous. For $w \in K$, we have that 
		\[
		|P_{2m} (w)| = \left| \int_G [(Q\circ \gamma ) (w)]^{2m} \, d\mu (\gamma) \right| \leq r^{2m}, 
		\] 
		which implies that $\sup_{w\in K} |P_{2m} (w)| \rightarrow 0$ as $m \rightarrow \infty$.

		However, we have that 
		\[
		|P_{2m} (z)| = \int_G \left[(Q\circ \gamma ) (z)\right]^{2m} \, d\mu (\gamma) \geq \int_{V_z} \left[ (Q\circ \gamma ) (z) \right]^{2m} \, d\mu (\gamma) \geq \mu(V_z)
		\]
		and it is clear by the definition of the Haar measure that $\mu(V_z)>0$. 	
	\end{proof}

	\begin{cor}
	Let $(G,\tau)$ be a compact topological group of $\Lin (X)$ that acts separately continuously on $X$ and $K$ a closed convex balanced subset of $X$ that is invariant under the action of $G$. If $z$ is an element in $X \setminus K$, then there exists a homogeneous continuous $G$-invariant polynomial that separates $z$ and $K$. 
	\end{cor}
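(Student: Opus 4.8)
The plan is to derive this corollary from Theorem~\ref{Thrm:real-case} by producing, from the Hahn--Banach separation data, a single continuous polynomial $Q$ that separates $z$ from $K$, and then invoking the theorem (in its homogeneous form) to symmetrize $Q$ into a $G$-invariant homogeneous polynomial. Since $K$ is closed, convex and balanced and $z \notin K$, the classical Hahn--Banach separation theorem gives a continuous linear functional $f$ on $X$ with $\sup_{w \in K} |f(w)| < |f(z)|$; this $f$ is in particular a $1$-homogeneous continuous polynomial that separates $z$ and $K$. (One must be mildly careful about the real-versus-complex scalar field, but in either case $\sup_{w\in K}|f(w)|<|f(z)|$ follows from the standard separation statement together with the fact that $K$ is balanced, so the absolute values can be inserted.)

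Next I would check that the hypotheses of Theorem~\ref{Thrm:real-case} are met: $(G,\tau)$ is a compact topological group of $\Lin(X)$ acting separately continuously, $K$ is $G$-invariant, and $z \in X \setminus K$ is separated from $K$ by the continuous polynomial $Q := f$. The theorem then yields a $G$-invariant continuous polynomial $P$ separating $z$ and $K$; and since $Q$ is homogeneous (of degree $1$), the ``furthermore'' clause of the theorem guarantees that $P$ may be taken homogeneous. Tracing through the proof of the theorem, $P$ is of the form $P_{2m}(w) = \int_G [(f\circ\gamma)(w)]^{2m}\,d\mu(\gamma)$ for a suitably large $m$, which is visibly $2m$-homogeneous, so this is consistent.

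There is essentially no obstacle here: the only point requiring a word of care is the passage from the real/complex Hahn--Banach theorem (which produces $\re f$ or $f$ with a one-sided inequality $\sup_{w\in K}\re f(w) < \re f(z)$, or the statement already quoted in the introduction in terms of $|f|$) to the two-sided/absolute-value form $\sup_{w\in K}|f(w)| < |f(z)|$ demanded by the definition of ``separates''. This is handled by the balancedness of $K$: for $w \in K$ and any scalar $\lambda$ of modulus one, $\lambda w \in K$, so $\sup_{w\in K}|f(w)| = \sup_{w\in K}\re f(w)$, and the desired strict inequality follows. After that, the corollary is immediate from Theorem~\ref{Thrm:real-case}. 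I would write the proof in three or four lines accordingly.
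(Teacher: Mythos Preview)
Your proposal is correct and follows essentially the same approach as the paper: apply Hahn--Banach to obtain a linear functional $f$ separating $z$ from the closed convex balanced set $K$, use balancedness to pass from the one-sided inequality to $\sup_{w\in K}|f(w)|<|f(z)|$, and then invoke Theorem~\ref{Thrm:real-case} with the $1$-homogeneous polynomial $Q=f$ to obtain a homogeneous $G$-invariant separating polynomial. The paper's proof is precisely this, stated in three lines.
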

	
	\begin{proof}
	By the Hahn-Banach separation theorem there exists a linear functional $f \in X^*$ such that $\sup_{w \in K} \text{Re}f(w) < \text{Re} f(z)$. As $K$ is balanced, we deduce that 
	\[
	\sup_{w \in K} |f(w)| < |f(z)|. 
	\]
	By Theorem \ref{Thrm:real-case}, there exists a homogeneous $G$-invariant polynomial $P$ that separates $z$ and $K$. 
	\end{proof} 

			We show now that letting $m$ tend to infinity in the proof of Theorem \ref{Thrm:real-case} is necessary to get the result. Specifically, for given $m \in \N$. The polynomial in \eqref{mth_symmetrization} is the \textit{$m$-th $G$-symmetrization of the polynomial $P$}. The following proposition shows that for a fixed natural number $N$ it is possible to find a space $X$, a group $G$ and a polynomial $Q$ which separates a given point $z \in X \setminus K$ and a set $K\subset X$, but its $l$-th $G$-symmetrizations $P_{l}$ ($1\leq l < N$) fail to separate the point $z \in X \setminus K$ and the set $K$ while the $N$-th $G$-symmetrization $P_N$ separates $z$ and $K$.

	\begin{prop}
		For every natural number $N>1$ and a natural number $k$, given $X=\mathbb R^{2k+1}$, there exist a set $K\subset X$, a point $z\in X\setminus K$ and a continuous polynomial $Q$ such that $Q$ separates $z$ and $K$, $P_{l}$ does not separate $z$ and $K$ for $l=1,\ldots,N-1$ and $P_{N}$ separates $z$ and $K$, where $P_{l}$ is the $l$-th symmetrization of $Q$ for the group $\sym(\{1,\dots, 2k+1\})$.
	\end{prop}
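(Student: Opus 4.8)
The plan is to exploit the fact that the origin is a fixed point of the whole action: the singleton $K:=\{0\}$ is then $G$-invariant and every $G$-invariant polynomial is constant on it, so the statement reduces to a scalar inequality once $Q$ is arranged to ``concentrate'' on a single point of the orbit of $z$. Write $D:=(2k+1)!=|G|$ for the order of $G=\sym(\{1,\dots,2k+1\})$, acting by coordinate permutations (note $D>1$), and set $\lambda:=D^{1/(N-1)}>1$. Take $z:=(1,2,\dots,2k+1)$; since its coordinates are pairwise distinct, its stabilizer in $G$ is trivial, so the $D$ points $\{\gamma(z):\gamma\in G\}$ are pairwise distinct and none equals $0$. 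Using the elementary interpolation fact that on any finite set of distinct points of $\mathbb{R}^{2k+1}$ some polynomial (built from products of affine forms) attains any prescribed values, I would choose a polynomial $Q$ with
\[
Q(0)=1,\qquad Q(z)=\lambda,\qquad Q(\gamma(z))=0\ \text{ for every }\gamma\in G\setminus\{\mathrm{Id}\},
\]
and put $K:=\{0\}$. Then $\sup_{w\in K}|Q(w)|=|Q(0)|=1<\lambda=|Q(z)|$, so $Q$ separates $z$ and $K$.

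Next I would compute the symmetrizations, using that the Haar measure of a finite group is the normalized counting measure, so $P_l(w)=\frac1D\sum_{\gamma\in G}(Q\circ\gamma)(w)^l$. Since $\gamma(0)=0$ for every $\gamma$, one has $P_l(0)=Q(0)^l=1$, hence $\sup_{w\in K}|P_l(w)|=1$ for all $l\geq1$. On the other hand, only the summand $\gamma=\mathrm{Id}$ is nonzero in $P_l(z)$, so $P_l(z)=\lambda^l/D=D^{\frac{l}{N-1}-1}$. Therefore, for $1\leq l\leq N-1$ the exponent $\frac{l}{N-1}-1$ is $\leq0$, so $|P_l(z)|\leq1=\sup_{w\in K}|P_l(w)|$ and $P_l$ does not separate $z$ and $K$; while for $l=N$ the exponent equals $\frac1{N-1}>0$, so $|P_N(z)|=D^{1/(N-1)}>1=\sup_{w\in K}|P_N(w)|$ and $P_N$ separates $z$ and $K$. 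This gives the proposition.

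I do not expect a genuine obstacle. The only point requiring care is the choice of the ``gap'' $\lambda=D^{1/(N-1)}$: it is exactly the value for which the scalar comparison of $\lambda^l$ with $D$ changes direction between $l=N-1$ (where $\lambda^{N-1}=D$) and $l=N$ (where $\lambda^{N}>D$), which is what pins the first separating symmetrization to index $N$. The existence of the interpolating $Q$, the form of the Haar measure on a finite group, and the $G$-invariance of $\{0\}$ are all immediate. If a set $K$ with nonempty interior were desired, one could instead take $K$ a small closed ball and only ask $Q$ to stay close to $1$ on it, turning the exact identities above into estimates; I would keep $K=\{0\}$, since it makes the computation exact and costs nothing.
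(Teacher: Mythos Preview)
Your argument is correct. The interpolation construction, the evaluation of $P_l(0)=1$ from $\gamma(0)=0$, and the computation $P_l(z)=\lambda^l/D=D^{l/(N-1)-1}$ are all sound, and the threshold $\lambda=D^{1/(N-1)}$ is chosen precisely so that $|P_l(z)|\leq 1$ for $l\leq N-1$ while $|P_N(z)|>1$.

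Your route is quite different from the paper's. The paper takes $K$ to be the closed unit ball of $\ell_{2N}^{\,2k+1}$, the point $z=(1+\eps,0,\dots,0)$ just outside it, and the explicit $2$-homogeneous polynomial $Q(x)=\big(x_1-\alpha(x_2+\dots+x_n)\big)^2$, then carries out delicate estimates (involving H\"older's inequality and carefully tuned $\eps,\alpha$) to compare $P_l(z)$ with the value of $P_l$ at a specific alternating-sign point of $K$. By contrast, you collapse $K$ to the fixed point $\{0\}$ and manufacture $Q$ by interpolation on the finite orbit of $z$ together with $0$, which turns the whole question into a one-line scalar inequality. Your approach is decidedly shorter and more transparent; what the paper's construction buys is a ``genuine'' example in the spirit of the surrounding results: $K$ is a closed, convex, balanced, $G$-invariant set with nonempty interior (indeed the unit ball), and $Q$ is an explicit $2$-homogeneous polynomial, so the phenomenon is exhibited in exactly the setting of the corollary to Theorem~\ref{Thrm:real-case}. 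Your own final remark anticipates this trade-off accurately.
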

	\begin{proof}
		Fix a natural number $N>1$ and $n=2k+1$ and denote by $G=\sym(\{1,\dots, n\})$. Consider $X=\ell_{2N}^{n}$ and the compact set $K = \{ x= (x_1, \dots, x_n) \in \R^n : \|x\|_{\ell_{2N}^{n}} = \left( x_1^{2N} + \dots + x_n^{2N} \right)^{\frac{1}{2N}} \leq 1\}$. Let us pick $\eps,\alpha >0$ so small that 
		\begin{equation}\label{eps:condition1}
		0< \eps < n^{\frac{1}{2(N-1)}-\frac{1}{2N}} - 1
		\end{equation}
		and 
		\begin{equation}\label{alpha:condition1}
		0< \alpha < \min\left\{\frac{\eps}{2}\left(\frac{1}{n-1}\right)^{\frac{2N-1}{2N}}, \min_{l = 1, \dots, N-1}\left\{\left[ \left(\frac{n}{n-1}\right) \left\{	\left(\frac{1}{(1+\eps) \sqrt[\leftroot{-2}\uproot{2} 2N]{n}} \right)^{2l} - \frac{1}{n} \right\} 	\right]^{\frac{1}{2l}}\right\}\right\}.
		\end{equation}
		Notice that the right hand side of \eqref{alpha:condition1} is positive due to the condition \eqref{eps:condition1} on $\eps >0$. 
		
		Let us fix the point $z=(1+\eps , 0, \dots, 0)\in \mathbb R^{n}$ and the polynomial 
		\[
		Q(x_1,\dots,x_n) = \left(x_1 - \alpha (x_2 + \dots + x_n)\right)^{2}.
		\]
		We claim that the point $z$, the set $K$ and the polynomial $Q$ satisfy the first two conditions of the proposition.
	
		First we show that $Q$ separates $z$ from $K$. Note that, by H\"older's inequality,
		\begin{align*}
		|Q(x_1,\dots, x_n)| &= \left| x_1 - \alpha (x_2 + \dots + x_n) \right|^{2}\\
		 &\leq \left(1 + \alpha \|(x_2,\dots, x_n)\|_{\ell_{2N}^{n-1}} \|(1,\dots, 1)\|_{\ell_{\frac{2N}{2N-1}}^{n-1}} \right)^{2}\\
		&\leq \left(1 + \alpha (n-1)^{\frac{2N-1}{2N}} \right)^{2}\\
		&< \left(1 + \frac{\eps}{2} \left(\frac{1}{n-1}\right)^{\frac{2N-1}{2N}} (n-1)^{\frac{2N-1}{2N}}\right)^{2}\quad (\text{by }\eqref{alpha:condition1})\\
		&= \left(1+ \frac{\eps}{2}\right)^{2}
		\end{align*}
		for every $(x_1,\dots, x_n) \in K$.
		However, we have that $Q(1+\eps, 0, \dots, 0) = (1 + \eps)^{2}$.

		Now we are going to show that $P_{l}$ does not separate $z$ and $K$ for $l=1,\ldots,N-1$. Note that the $G$-invariant polynomial $P_{l}$ is given by 
		\begin{align*}
		P_{l} (x_1,\dots, x_n) 
		&= \frac{1}{n!} \sum_{\sigma\in\sym(\{1,\dots, n\})} Q(x_{\sigma(1)}, \dots, x_{\sigma(n)})^{2l} \\
		&= \frac{1}{n!} \sum_{\sigma\in\sym(\{1,\dots, n\})} \left[ x_{\sigma(1)} - \alpha(x_{\sigma(2)} + \dots + x_{\sigma(n)}) \right]^{2l}.
		\end{align*}

		From this, we can see that, as $n$ is odd, if we write $n = 2k + 1$ for some $k \in \N$, then 
		\begin{align*}
		P_{l} &\left( \frac{1}{\sqrt[\leftroot{-2}\uproot{2} 2N]{n}}, \underbrace{	\frac{1}{\sqrt[\leftroot{-2}\uproot{2} 2N]{n}}, \frac{-1}{\sqrt[\leftroot{-2}\uproot{2} 2N]{n}}, \dots, \frac{1}{\sqrt[\leftroot{-2}\uproot{2} 2N]{n}}, \frac{-1}{\sqrt[\leftroot{-2}\uproot{2} 2N]{n}}}_{2k-\text{terms}} \right) \\
		&= \frac{1}{n!} \left[\left(\frac{n+1}{2}\right)(n-1)! \left(\frac{1}{\sqrt[\leftroot{-2}\uproot{2} 2N]{n}} \right)^{2l} + \left(\frac{n-1}{2} \right)(n-1)! \left( \frac{1}{\sqrt[\leftroot{-2}\uproot{2} 2N]{n}} + \frac{2\alpha}{\sqrt[\leftroot{-2}\uproot{2} 2N]{n}} \right)^{2l} \right] \\
		&= \left(	\frac{n+1}{2n} \right) \left(\frac{1}{\sqrt[\leftroot{-2}\uproot{2} 2N]{n}}\right)^{2l} + \left(\frac{n-1}{2n} \right) \left( \frac{1}{\sqrt[\leftroot{-2}\uproot{2} 2N]{n}} + \frac{2 \alpha}{\sqrt[\leftroot{-2}\uproot{2} 2N]{n}} \right)^{2l}\\
 &> \left(	\frac{n+1}{2n} \right) \left(\frac{1}{\sqrt[\leftroot{-2}\uproot{2} 2N]{n}}\right)^{2l} + \left(\frac{n-1}{2n} \right) \left( \frac{1}{\sqrt[\leftroot{-2}\uproot{2} 2N]{n}}\right)^{2l} \\
		&= \left(\frac{1}{\sqrt[\leftroot{-2}\uproot{2} 2N]{n}}\right)^{2l}.
		\end{align*}

		However,
		\begin{align*}
		P_{l} (1+\eps, 0, 0, \dots , 0) &= \frac{1}{n!} \sum_{\substack{\sigma\in\sym(\{1,\dots, n\})\\\sigma(1)=1}} \left(1+\eps\right)^{2l} + \frac{1}{n!} \sum_{\substack{\sigma\in\sym(\{1,\dots, n\})\\\sigma(1)\ne 1}}\left(\alpha(1+\eps)\right)^{2l}\\
		&= (1+\eps)^{2l} \left(\frac{1}{n} + \left(\frac{n-1}{n} \right) \alpha^{2l} \right)\\
		&< (1+\eps)^{2l} \left(\frac{1}{(1+\eps) \sqrt[\leftroot{-2}\uproot{2} 2N]{n}} \right)^{2l} \quad(\text{by } \eqref{alpha:condition1})\\
		 &= \left(\frac{1}{\sqrt[\leftroot{-2}\uproot{2} 2N]{n}}\right)^{2l} \\
		&= P_{l} \left( \frac{1}{\sqrt[\leftroot{-2}\uproot{2} 2N]{n}}, 	\frac{1}{\sqrt[\leftroot{-2}\uproot{2} 2N]{n}}, \frac{-1}{\sqrt[\leftroot{-2}\uproot{2} 2N]{n}}, \dots, \frac{1}{\sqrt[\leftroot{-2}\uproot{2} 2N]{n}}, \frac{-1}{\sqrt[\leftroot{-2}\uproot{2} 2N]{n}} \right).
		\end{align*} 
		This shows that the $G$-invariant polynomial $P_{l}$ does not separate the point $z$ and the set $K$ for every $1 \leq l \leq N-1$.
		
		It only lasts to show that $P_N$ separates $z$ and $K$. For this, note that 
		\begin{align*}
		\lim_{\alpha\to 0}\sup_{(x_{1},\ldots,x_{n})\in K} &\left\{\frac{1}{n!} \sum_{\sigma\in\sym(\{1,\dots, n\})} \left[ x_{\sigma(1)} - \alpha(x_{\sigma(2)} + \dots + x_{\sigma(n)} \right]^{2N}\right\}\\
		&=\sup_{(x_{1},\ldots,x_{n})\in K} \left\{\frac{1}{n!} \sum_{\sigma\in\sym(\{1,\dots, n\})} x_{\sigma(1)}^{2N}\right\}
		=\frac{1}{n}.
		\end{align*}
		However, 
		\[
		P_{N} (1+\eps, 0, 0, \dots , 0) = (1+\eps)^{2N} \left(\frac{1}{n} + \left(\frac{n-1}{n} \right) \alpha^{2N} \right)>\frac{1}{n}
		\]
		for all positive numbers $\alpha$. Therefore, by choosing $\alpha$ small enough we have that $P_{N}$ separates $z$ and $K$.

	\end{proof}

	Note that in general it is not hard to find large families of mappings on a Banach space $X$ that are invariant under the action of a compact topological group $G$. Indeed, for any mapping $f$ on a Banach space $X$ the $G$-symetrization of $f$ given by
	\[
		F(x):= \int_G (f\circ \gamma ) (w) \, d\mu (\gamma)
	\] 
	is a $G$-invariant mapping. 
	
	 We would like to end this section by showing a natural way of constructing sets that are invariant under the action of a compact topological group $G$. For this, note that if $\{f_{i}\}_{i\in I}$ is a family of $G$-invariant mappings from $X$ to $\mathbb R$, the mapping $F:X\mapsto \mathbb R^{I}$ defined by $F(x)=(f_{i}(x))_{i\in I}$ is a $G$-invariant mapping. It is easily checked that the set $F^{-1}(K)$ is $G$-invariant for any set $K\subset \mathbb R^{I}$.

	\section{Complex case}
		\label{sec:complex}


This section is devoted to study the case when $X$ is a complex Banach space. However, as we will see in Example \ref{cannot-separate} we cannot expect a general result as in the real case.
The following lemma will be used several times in this note.

\begin{lemma}[{\cite[Lemma 2.2]{AFM}}]
\label{lem_AFM}
Given $r$ complex numbers of modulus one, $\{z_{1}\ldots,z_{r}\}$, there exists a strictly increasing sequence of natural numbers $(m_{k})_{k=1}^{\infty}$ such that the sequence $(z_{j}^{m_{k}})$ converges to $1$ for $j=1,\ldots,r$.
\end{lemma}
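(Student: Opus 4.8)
The plan is to work inside the compact group $\T^{r}$ (the $r$-fold product of the unit circle) and to combine Bolzano--Weierstrass with the group structure. Put $w_{m} := (z_{1}^{m}, \dots, z_{r}^{m}) \in \T^{r}$ for $m \in \N$, and equip $\T^{r}$ with the metric $d(u,v) = \max_{1 \leq j \leq r} |u_{j} - v_{j}|$. The key structural remark is that $w_{a+b}$ is the coordinatewise product of $w_{a}$ and $w_{b}$, and that, since each coordinate of $w_{a}$ has modulus one, the translation $v \mapsto w_{a}\cdot v$ is an isometry of $(\T^{r},d)$; hence
\[
d\bigl(w_{a+b}, w_{a}\bigr) = d\bigl(w_{b}, (1,\dots,1)\bigr) \qquad (a,b \in \N).
\]

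Since $\T^{r}$ is a compact metric space, $(w_{m})_{m \in \N}$ has a convergent, hence Cauchy, subsequence $(w_{n_{k}})_{k}$. Fix $\eps > 0$ and choose $K$ with $d(w_{n_{k}},w_{n_{l}}) < \eps$ for all $k,l \geq K$. Applying the displayed identity with $a = n_{K}$ and $b = n_{K+i} - n_{K} \geq 1$ gives $d\bigl(w_{n_{K+i}-n_{K}},(1,\dots,1)\bigr) < \eps$ for every $i \geq 1$; and since $(n_{k})$ is strictly increasing, $n_{K+i} - n_{K} \to \infty$ as $i \to \infty$. Therefore the set
\[
S_{\eps} := \bigl\{ m \in \N : d\bigl(w_{m},(1,\dots,1)\bigr) < \eps \bigr\}
\]
is unbounded for every $\eps > 0$.

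With this in hand the sequence is built by an elementary recursion: choose $m_{1} \in S_{1}$, and, having chosen $m_{1} < \dots < m_{k-1}$, use the unboundedness of $S_{1/k}$ to pick $m_{k} \in S_{1/k}$ with $m_{k} > m_{k-1}$. Then $(m_{k})_{k}$ is strictly increasing and $\max_{1 \leq j \leq r} |z_{j}^{m_{k}} - 1| < 1/k$, so $z_{j}^{m_{k}} \to 1$ for each $j = 1, \dots, r$.

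The one point that needs care --- and the reason plain compactness is not quite enough --- is that Bolzano--Weierstrass by itself would only show that $(1,\dots,1)$ is a limit point of $\{w_{m} : m \in \N\}$, which could in principle be witnessed by a bounded set of indices. The isometry identity $d(w_{a+b},w_{a}) = d(w_{b},(1,\dots,1))$ is precisely what upgrades this to the unboundedness of $S_{\eps}$, and that unboundedness is exactly what the required strict monotonicity of $(m_{k})$ rests on.
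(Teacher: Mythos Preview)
The paper does not supply its own proof of this lemma: it is stated with a citation to \cite[Lemma~2.2]{AFM} and then used as a black box throughout Sections~\ref{sec:complex} and~4. So there is nothing in the present paper to compare your argument against.

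That said, your proof is correct. The identity $d(w_{a+b},w_{a}) = d(w_{b},(1,\dots,1))$ is exactly the right device: it converts the Cauchy property of a compactness-given subsequence $(w_{n_k})$ into the statement that the differences $n_{K+i}-n_K$ lie in $S_\eps$, and since these differences are unbounded you get the strictly increasing sequence by the obvious diagonal choice. This is the standard torus/pigeonhole argument behind Dirichlet-type simultaneous approximation, and it is precisely the sort of proof one would expect the cited reference to contain.

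One small quibble with your closing commentary (not with the proof itself): Bolzano--Weierstrass alone does not show that $(1,\dots,1)$ is a limit point of $\{w_m\}$ --- it only gives \emph{some} accumulation point. The group structure is doing two jobs at once: identifying the accumulation point as the identity \emph{and} guaranteeing that it is hit along an unbounded set of exponents. Your proof handles both correctly; only the prose explanation slightly undersells the first.
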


We denote the argument of a complex number $z$ by $Arg(z)$.

\begin{theorem}\label{complex_separation}
	Let $(G,\tau)$ be a compact topological group of $\Lin (X)$ that acts separately continuously on a complex Banach space $X$ and $K$ a set in $X$ that is invariant under the action of $G$. If $z$ is an element in $X \setminus K$ that can be separated from $K$ by a polynomial $Q$ which satisfies that $\vert Q(z)\vert>1$ and for some $\eta \in (0,1)$ 
	\begin{equation}\label{condition_complex_separation}
		\# \{ Arg(Q(\gamma(z))) \in [0,2\pi) : |Q(\gamma(z))| \geq \eta, \, \gamma \in G\} \,\, \text{is finite},
	\end{equation}	
	then there exists a $G$-invariant continuous polynomial $P$ that separates $z$ and $K$. Furthermore, if $Q$ is homogeneous, then $P$ can be chosen to be homogeneous. 
\end{theorem}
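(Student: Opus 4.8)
The plan is to follow the strategy of Theorem~\ref{Thrm:real-case}, averaging powers of $Q\circ\gamma$ as in \eqref{mth_symmetrization}, but to choose the exponents along a subsequence supplied by Lemma~\ref{lem_AFM} so that the arguments of $Q(\gamma(z))$ do not produce destructive cancellation. First I would normalize: after replacing $Q$ by $cQ$ for a suitable $c\in(0,1]$ — which changes nothing in \eqref{condition_complex_separation}, since $Arg(cQ(\gamma(z)))=Arg(Q(\gamma(z)))$ and $\{\gamma:|cQ(\gamma(z))|\ge\eta\}\subseteq\{\gamma:|Q(\gamma(z))|\ge\eta\}$ — one may assume $\sup_{w\in K}|Q(w)|\le r<1$ while still $|Q(z)|>1$ (such a $c$ exists because $\sup_{w\in K}|Q(w)|<|Q(z)|$ and $|Q(z)|>1$). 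Put $M=\sup_{\gamma\in G}|Q(\gamma(z))|$, which is finite since $\gamma\mapsto Q(\gamma(z))$ is continuous on the compact group $G$, and note $M\ge|Q(z)|>1$. Fix two levels $1<\beta<\beta'<M$ and set $U_t=\{\gamma\in G:|Q(\gamma(z))|>t\}$; each $U_t$ is open, and $U_{\beta'}$ is nonempty, so $\mu(U_{\beta'})>0$.

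Since $\beta>1>\eta$, hypothesis \eqref{condition_complex_separation} says that $A:=\{Arg(Q(\gamma(z))):\gamma\in G,\ |Q(\gamma(z))|\ge\eta\}=\{\theta_1,\dots,\theta_r\}$ is finite, and it contains $Arg(Q(\gamma(z)))$ for every $\gamma\in U_\beta$. Applying Lemma~\ref{lem_AFM} to the unimodular numbers $e^{i\theta_1},\dots,e^{i\theta_r}$ yields a strictly increasing sequence $(m_k)_k$ with $e^{im_k\theta_j}\to 1$ for $j=1,\dots,r$. For these $m_k$ I would take the $G$-invariant continuous polynomials $P_{m_k}$ defined by \eqref{mth_symmetrization}, which are homogeneous of degree $m_k\deg Q$ when $Q$ is homogeneous. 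Exactly as in the real case, $G$-invariance of $K$ gives $|Q(\gamma(w))|\le r$ for all $w\in K$, $\gamma\in G$, hence $\sup_{w\in K}|P_{m_k}(w)|\le r^{m_k}\to 0$.

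The heart of the proof is a lower bound for $|P_{m_k}(z)|$. Split
\[
P_{m_k}(z)=\int_{U_\beta}[Q(\gamma(z))]^{m_k}\,d\mu(\gamma)+\int_{G\setminus U_\beta}[Q(\gamma(z))]^{m_k}\,d\mu(\gamma),
\]
where the second integral has modulus at most $\beta^{m_k}$. Partition $U_\beta$ into the Borel sets $U_{\beta,j}=\{\gamma\in U_\beta:Arg(Q(\gamma(z)))=\theta_j\}$; on $U_{\beta,j}$ one has $[Q(\gamma(z))]^{m_k}=|Q(\gamma(z))|^{m_k}e^{im_k\theta_j}$, so with $a_{j,k}=\int_{U_{\beta,j}}|Q(\gamma(z))|^{m_k}\,d\mu\ge 0$ the first integral equals $\sum_{j=1}^r e^{im_k\theta_j}a_{j,k}$. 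Hence, once $k$ is large enough that $\max_j|e^{im_k\theta_j}-1|\le\frac{1}{2}$,
\begin{align*}
\left|\int_{U_\beta}[Q(\gamma(z))]^{m_k}\,d\mu(\gamma)\right| &\ge \sum_{j=1}^r a_{j,k}-\sum_{j=1}^r|e^{im_k\theta_j}-1|\,a_{j,k}\\
&\ge \frac{1}{2}\sum_{j=1}^r a_{j,k}=\frac{1}{2}\int_{U_\beta}|Q(\gamma(z))|^{m_k}\,d\mu(\gamma)\ge\frac{1}{2}(\beta')^{m_k}\mu(U_{\beta'}),
\end{align*}
using $U_{\beta'}\subseteq U_\beta$ and $|Q(\gamma(z))|>\beta'$ on $U_{\beta'}$. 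Therefore $|P_{m_k}(z)|\ge\frac{1}{2}(\beta')^{m_k}\mu(U_{\beta'})-\beta^{m_k}=\beta^{m_k}\bigl(\frac{1}{2}(\beta'/\beta)^{m_k}\mu(U_{\beta'})-1\bigr)\to+\infty$, since $\beta'/\beta>1$ and $\mu(U_{\beta'})>0$. Comparing with $\sup_{w\in K}|P_{m_k}(w)|\le r^{m_k}\to 0$, any $P_{m_k}$ with $k$ large is the required $G$-invariant polynomial (continuous, and homogeneous when $Q$ is) that separates $z$ and $K$.

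The step I expect to be the main obstacle is precisely this lower bound: one must keep the phases $e^{im_k\theta_j}$ from annihilating the mass of $\int_{U_\beta}|Q(\gamma(z))|^{m_k}\,d\mu$, and one must allow for that mass being comparatively small, because the maximum modulus $M$ may be attained only on a $\mu$-null set. This is why the cut is made at a level $\beta>1$ strictly below a second level $\beta'<M$: the tail over $G\setminus U_\beta$ grows only like $\beta^{m_k}$, whereas the retained part grows at least like $(\beta')^{m_k}\mu(U_{\beta'})$ and so eventually dominates. Lemma~\ref{lem_AFM} is what makes \eqref{condition_complex_separation} usable, and it is essential that $\eta<1<\beta$ so that only the finitely many arguments of $A$ occur on $U_\beta$.
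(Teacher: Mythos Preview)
Your proof is correct and follows essentially the same approach as the paper: average $[Q\circ\gamma]^{m}$ over $G$, use Lemma~\ref{lem_AFM} applied to the finitely many arguments supplied by \eqref{condition_complex_separation} to align phases, and compare a trivially bounded tail against a positive main part. The only differences are organizational: the paper cuts at the hypothesis level $\eta$ and uses the neighborhood $V_z=\{\gamma:|Q(\gamma(z))|>1\}$ to obtain a \emph{fixed} positive lower bound $|P_m(z)|\ge\frac12(1-r)\mu(V_z)$, whereas you introduce two intermediate levels $1<\beta<\beta'<M$ and obtain a lower bound $\frac12(\beta')^{m_k}\mu(U_{\beta'})-\beta^{m_k}\to+\infty$. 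Your more careful handling of the normalization (checking that scaling by $c\in(0,1]$ does not disturb \eqref{condition_complex_separation}) is a nice touch that the paper glosses over.
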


\begin{proof}
	Without loss of generality, we may assume that 
	\[
	\sup_{w\in K} |Q(w)| \leq r < 1 \quad \text{and} \quad |Q(z)| > 1.
	\]
	As the function $\gamma \in G \mapsto |(Q\circ \gamma )(z)| \in [0,+\infty)$ is continuous, we choose an open neighborhood $V_z$ of $Id_X$ in $G$ such that $|(Q\circ \gamma)(z)| > 1$ for every $\gamma \in V_z$. 

	For each $m \in \N$, consider 
	\[
	P_m (w) = \int_G [(Q\circ \gamma ) (w)]^m \, d\mu (\gamma) \quad (w \in X),
	\]
	where $d\mu$ is the Haar measure on the compact group $G$. Note that $P_m$ is $mk$-homogeneous when $Q$ is $k$-homogeneous. 
	
	On the one hand, for $w \in K$, we have that 
	\[
	|P_m (w)| = \left| \int_G [(Q\circ \gamma ) (w)]^m \, d\mu (\gamma) \right| \leq r^m, 
	\] 
	which implies that $\sup_{w\in K} |P_m (w)| \rightarrow 0$ as $m \rightarrow \infty$.
	
	On the other hand, for each $\gamma \in G$, let $\theta_{\gamma}$ be a real number such that $Q(\gamma(z)) = |Q(\gamma(z))|e^{i\theta_{\gamma}}$. If we let 
	$G_{\eta} := \{ \gamma \in G : |Q(\gamma(z))| \geq \eta \}$, then by Lemma \ref{lem_AFM} there exists a natural number $m$ large enough so that 
	\[
	\left| e^{i\theta_{\gamma} m} - 1 \right| < r, \quad \forall\, \gamma \in G_{\eta}.
	\] 
	Now, we have that 
	\begin{align*}
	|P_{m} (z)| &= \left| \int_G [(Q\circ \gamma ) (z)]^{m} \, d\mu (\gamma) \right| \\
	&= \left| \int_G \left|(Q\circ \gamma ) (z)\right|^{m} e^{i \theta_{\gamma} m} \, d\mu (\gamma) \right| \\
	&\geq \int_G \left|(Q\circ \gamma ) (z)\right|^{m} \, d\mu (\gamma) - \int_G \left|e^{i\theta_{\gamma} m} - 1\right| \left|(Q\circ \gamma ) (z)\right|^{m} \, d\mu (\gamma) 
	\end{align*}
	However, 
	\begin{align*}
	\int_G \left|e^{i\theta_{\gamma} m} - 1\right| \left|(Q\circ \gamma ) (z)\right|^{m} \, d\mu (\gamma) 
	&\leq \int_{G_{\eta}} \left|e^{i\theta_{\gamma} m} - 1\right| \left|(Q\circ \gamma ) (z)\right|^{m} \, d\mu (\gamma) + 2\eta^m \\
	&\leq r \int_{G_{\eta}} \left|(Q\circ \gamma ) (z)\right|^{m} \, d\mu (\gamma) + 2 \eta^m\\
	&\leq r \int_{G} \left|(Q\circ \gamma ) (z)\right|^{m} \, d\mu (\gamma) + 2 \eta^m. 
	\end{align*}
		
It follows that for $m \in \N$ big enough 
	\begin{align*}
	|P_{m} (z)| &\geq \int_G \left|(Q\circ \gamma ) (z)\right|^{m} \, d\mu (\gamma) - \left( r \int_{G} \left|(Q\circ \gamma ) (z)\right|^{m} \, d\mu (\gamma) + 2\eta^m \right) \\
	&= (1-r ) \int_G \left|(Q\circ \gamma ) (z)\right|^{m} \, d\mu (\gamma) - 2\eta^m \\
	&\geq (1-r ) \int_{V_{z}} \left|(Q\circ \gamma ) (z)\right|^{m} \, d\mu (\gamma) - 2\eta^m \geq (1-r )\mu(V_{z}) - 2\eta^m > \frac{1}{2}(1-r)\mu(V_{z})> 0
	\end{align*}
	as $\mu(V_z) > 0$ clearly by definition of the Haar measure.
	
\end{proof}

	\begin{rem}
	Note that if, in the previous theorem, the separating continuous polynomial $Q$ is already $G$-invariant then condition \eqref{condition_complex_separation} is trivially satisfied since the set consists of only one point, $Q(z)$. 
	\end{rem}

 \begin{rem}
	It is clear that if $G$ is a finite group, then the assumption \eqref{condition_complex_separation} holds for any continuous polynomial $Q$, which implies that Theorem \ref{complex_separation} is a generalisation of \cite[Theorem 2.3]{AFM}.
	However, even if the assumption \eqref{condition_complex_separation} is satisfied for some continuous polynomial $Q$, it does not imply the finiteness of the cardinality of $G$. For example, consider the group $R$ of operators on $c_0$ given by 
	\[
	\gamma (z) = (z_1,e^{2\pi ik_{2}/2} z_{2}, \ldots, e^{2\pi ik_{m}/m} z_m, \ldots ) \quad (k_{2},k_{3}, \ldots\in \mathbb N\cup\{0\}),
	\]
which is compact with the weak operator topology of $\Lin (c_0)$.
	For given $\eps > 0$ and $z \in c_0$ which lies outside the closed unit ball $\overline{B_{c_0}}$, there always exist $N, M \in \N$ with $N < M$ such that $|z_N| > 1$ and $|z_n| < \eps$ for every $n \geq M$. This implies that the coefficient functional $e_N^*$ separates $z$ from $\overline{B_{c_0}}$. Since $|e_N^* (\gamma (z)) | = |e_N^* (z)| > 1$ for every $ \gamma \in R$, 
	\begin{align*}
	\{ e_N^* ( \gamma(z) ) \in \C : | e_N^* ( \gamma(z) )| 
		\geq \eps, \, \gamma \in R \} 
	 	&= \{z_N, e^{2\pi i / N} z_N, \ldots, e^{2\pi i (2N-1) / N} z_N \};
	\end{align*}
	hence the assumption \eqref{condition_complex_separation} holds for the polynomial $e_N^*$, while $R$ is an infinite group. Furthermore, the number of elements in the set in \eqref{condition_complex_separation} can be made arbitrarily large if we modify the point $z$. We will study separation theorems associated to this group in more detail in Section \ref{sec:roots}.
	\end{rem}

As in the previous section, we would like to show a natural way of constructing sets that are invariant under the action of compact topological group. Clearly, the sets $F^{-1}(K)$ constructed from a family $(f_{i})_{i\in I}$ of $G$-invariant sets, i.e., $F(x) = (f_i(x))_{i\in I} : X \rightarrow \C^I$, will produce $G$-invariant sets. But we are more interested in the case where $I$ is finite. In this case, the definition of a polynomially convex set plays an important role since automatically the sets that we obtain satisfy that any point outside of the set can separated from the set by a $G$-invariant continuous polynomial. We recall now the definition of a polynomially convex set. The \textit{polynomially convex hull} of a compact subset $K$ of $\mathbb C^{n}$ is the set
 $$\widehat K = \{z \in \mathbb C^{n} : \vert P (z)\vert \leq \sup_{w \in K}|P(w)| \text{ for every polynomial } P \}.
 $$
A compact set $K$ of $\mathbb C^{n}$ is \textit{polynomially convex} if $\widehat K=K$.

\begin{prop}
Let $(G,\tau)$ be a compact topological group of $\Lin (X)$ that acts separately continuously on a complex Banach space $X$ and $f_{1},\ldots, f_{n}$ be continuous $G$-invariant polynomials on $X$. Let $F(x)=(f_{1}(x),\ldots,f_{n}(x))$. For any polynomially convex set $K\subset \mathbb C^{n}$ the set $F^{-1}(K)$ is $G$-invariant and any point $z\notin F^{-1}(K)$ can be separated from $F^{-1}(K)$ by a continuous $G$-invariant polynomials.
\end{prop}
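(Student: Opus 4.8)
The plan is to show that the whole proposition reduces to two essentially formal observations, the second of which is precisely what the definition of polynomial convexity supplies. First I would record the $G$-invariance of $F^{-1}(K)$: since each $f_i$ is $G$-invariant, the vector-valued map $F$ itself satisfies $F(\gamma(x)) = F(x)$ for every $\gamma \in G$ and every $x \in X$. Consequently, if $x \in F^{-1}(K)$ then $F(\gamma(x)) = F(x) \in K$, so $\gamma(x) \in F^{-1}(K)$; as $G$ is a group this yields $\gamma(F^{-1}(K)) = F^{-1}(K)$ for all $\gamma \in G$, i.e. $F^{-1}(K)$ is $G$-invariant.

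For the separation part, fix $z \notin F^{-1}(K)$, so that $F(z) \notin K$. Since $K$ is polynomially convex, $\widehat{K} = K$, hence $F(z) \notin \widehat{K}$; by the very definition of the polynomially convex hull this means there is a polynomial $p$ on $\C^n$ with
\[
\sup_{w \in K} |p(w)| < |p(F(z))|.
\]
Now set $Q := p \circ F$. Writing $p$ as a finite linear combination of monomials $w \mapsto w_1^{\alpha_1}\cdots w_n^{\alpha_n}$ and substituting $w_i = f_i(x)$ exhibits $Q$ as a finite sum of products of the continuous polynomials $f_1, \dots, f_n$, so $Q$ is a continuous polynomial on $X$ (the algebra of continuous polynomials being closed under finite sums and products). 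Moreover $F \circ \gamma = F$ gives $Q \circ \gamma = p \circ F \circ \gamma = p \circ F = Q$ for every $\gamma \in G$, so $Q$ is $G$-invariant. Finally, if $w \in F^{-1}(K)$ then $F(w) \in K$, whence $|Q(w)| = |p(F(w))| \le \sup_{v\in K}|p(v)| < |p(F(z))| = |Q(z)|$; thus $Q$ separates $z$ and $F^{-1}(K)$, which finishes the proof.

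I do not expect a genuine obstacle here. The entire content is carried by the definition of a polynomially convex set, which directly produces the separating polynomial $p$ on $\C^n$, and the $G$-invariance comes for free because the only construction performed is composition with the already $G$-invariant map $F$ --- in particular neither the symmetrization operator $S_G$ nor Theorem \ref{complex_separation} needs to be invoked. The only points deserving a line of justification are that $p\circ F$ indeed belongs to the algebra of continuous polynomials on $X$, and the (implicit) fact that ``polynomially convex'' forces $K$ to be compact, which is exactly where the finiteness of the index set $\{1,\dots,n\}$ enters.
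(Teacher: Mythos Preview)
Your proof is correct and is precisely the argument one would expect; in fact the paper states this proposition without proof, presumably because the authors regard it as a direct consequence of the definitions, exactly along the lines you have written. Your two steps --- the $G$-invariance of $F^{-1}(K)$ from $F\circ\gamma=F$, and the separation via $Q=p\circ F$ with $p$ supplied by the polynomial convexity of $K$ --- are the intended ones, and nothing further (in particular no appeal to the symmetrization operator or to Theorem~\ref{complex_separation}) is needed.
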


In general, we cannot expect in the complex case a general result like Theorem \ref{Thrm:real-case}, even in the case when $X$ is an $1$-dimensional space, as the following example shows. 

\begin{example}
	\label{cannot-separate}
	Let $\mathbb{T} = \{z \in \C : |z| =1 \}$ be the circle group (which is compact) that acts continuously on $\mathbb C$ if we consider multiplication of the complex numbers. Then it is obvious that $\mathbb{D} \subset \C$ is invariant under the action of $G = \{ w \mapsto zw : z \in \mathbb{T}\}$. Let us consider $f(z) := z$ for every $z \in \C$. Then every $w \in \C \setminus \mathbb{D}$ is separated from $\mathbb{D}$ by $f$. However, polynomials which are invariant under $G$ are just constant ones and a constant function cannot separate $w$ from $\mathbb{D}$. 
\end{example}

The above example shows that some conditions are required both for the space and for the group in order to have positive separation results. This is why we focus now on the case where the Banach space $X$ has a Schauder basis $(e_n)_{n\in\N}$. We denote by $\pi_n$ the restriction on the first $n$ coordinates, that is, if $z = \sum_{j=1}^{\infty} z_j e_j$, then $\pi_n(z)=\sum_{j=1}^{n} z_j e_j$ Let us denote by $\Pi_n : X \rightarrow \C^n$ the projection defined as $\Pi_n (z) = (z_1, \ldots, z_n)$ for every $z = \sum_{j=1}^{\infty} z_j e_j$ in $X$ and by $\iota_n : \C^n \rightarrow X$ the inclusion defined as $\iota_n (z_1,\ldots,z_n) = \sum_{j=1}^{n} z_j e_j $. Observe that $\pi_n = \iota_n \circ \Pi_n$ for each $n \in \N$. Given a group $G \subset \Lin (X)$ and $n \in \N$, consider the following set 
\[
\Sigma_n (G) := \{ \Pi_n \circ g \circ \iota_n : g \in G\} \subset \Lin(\C^n).
\]
Note that $\Sigma_n (G)$ need not be a group. Indeed, assume that $G$ is a group such that the element $g$ defined as 
\[
g(e_j) = \begin{cases}
e_j \quad &\text{for} \,\, j \neq N, j \neq N+1, \\
e_{N+1} \quad &\text{for} \,\, j=N, \\
e_{N} \quad &\text{for} \,\, j=N+1
\end{cases}
\]
for some fixed $N \in \N$ is in $G$. Then one can easily check that 
\[
(\Pi_N \circ g \circ \iota_N) (w) = (w_1, \ldots, w_{N-1}, 0), \quad w = (w_1, \ldots, w_{N}) \in \C^N;
\]
hence we have that $\Pi_N \circ g \circ \iota_N \in \Lin(\C^N)$ is not invertible. 

However, in many natural cases it is satisfied that for a sequence of natural numbers $(j_{n})_{n=1}^{\infty}$ the set $\Sigma_{j_{n}} (G)$ is a group. This is for instance the case when $G$ is the Cartesian product of the groups $G_{1}, G_{2},\ldots$ acting each one of them on a finite dimensional Banach space $X_{1},X_{2},\ldots$ and we consider the group $G_{1}\times G_{2}\times \cdots$ and the product space, with a suitable norm, $X=X_{1}\times X_{2}\times \cdots$ and the natural action of $G$ on $X$. 

The following straightforward lemma will be used in Theorem \ref{separation_schauder}.

\begin{lemma}\label{projected_invariant_group}
	Let $X$ a complex Banach space with Schauder basis, $(G,\tau)$ be a group of $\Lin (X)$ that acts separately continuously on the complex Banach space $X$ and $K$ a $G$-invariant set in $X$. If there exists $n \in \N$ such that 	
	$\Sigma_n (G)$ is a group of $\Lin (\C^n)$ and $\pi_{n} (K) \subset K$,
	then the set $\Pi_n (K)$ in $\C^n$ is invariant under the action of $\Sigma_n (G)$.
\end{lemma}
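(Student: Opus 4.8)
The plan is to unwind the definitions; the hypothesis that $\Sigma_n(G)$ is a group will enter only to upgrade an inclusion to an equality (and to make the phrase ``action of $\Sigma_n(G)$'' meaningful). Recall from the discussion preceding the lemma that $\pi_n = \iota_n \circ \Pi_n$, that $\Pi_n \circ \iota_n = \mathrm{Id}_{\C^n}$, and that every element of $\Sigma_n(G)$ has the form $\gamma = \Pi_n \circ g \circ \iota_n$ for some $g \in G$, acting on $\C^n$ as the linear map $w \mapsto \Pi_n(g(\iota_n(w)))$.

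\emph{Step 1: every $\gamma \in \Sigma_n(G)$ maps $\Pi_n(K)$ into $\Pi_n(K)$.} Fix $g \in G$ and $w \in \Pi_n(K)$, say $w = \Pi_n(x)$ with $x \in K$. Then $\iota_n(w) = \iota_n(\Pi_n(x)) = \pi_n(x)$, and the hypothesis $\pi_n(K) \subset K$ gives $\iota_n(w) \in K$. Since $K$ is $G$-invariant, $g(\iota_n(w)) \in K$, hence $\Pi_n\bigl(g(\iota_n(w))\bigr) \in \Pi_n(K)$; but this element is precisely $\gamma(w)$ for $\gamma = \Pi_n \circ g \circ \iota_n$. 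Thus $\gamma(\Pi_n(K)) \subset \Pi_n(K)$ for every $\gamma \in \Sigma_n(G)$. Note that this step uses neither that $\Sigma_n(G)$ is a group nor that its elements are invertible. \emph{Step 2: upgrade to equality.} By hypothesis $\Sigma_n(G)$ is a group of $\Lin(\C^n)$, so for $\gamma \in \Sigma_n(G)$ we also have $\gamma^{-1} \in \Sigma_n(G)$; applying Step 1 to $\gamma^{-1}$ gives $\gamma^{-1}(\Pi_n(K)) \subset \Pi_n(K)$, and applying $\gamma$ to both sides yields $\Pi_n(K) \subset \gamma(\Pi_n(K))$. Combining with Step 1, $\gamma(\Pi_n(K)) = \Pi_n(K)$ for every $\gamma \in \Sigma_n(G)$, which is the assertion that $\Pi_n(K)$ is invariant under the action of $\Sigma_n(G)$.

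There is essentially no obstacle here; the one point worth flagging is that one should \emph{not} try to prove the lemma by transporting the $G$-action along $g \mapsto \Pi_n \circ g \circ \iota_n$, since $\iota_n \circ \Pi_n = \pi_n \neq \mathrm{Id}_X$ and this assignment need not be multiplicative — indeed, as the example immediately preceding the lemma shows, $\Sigma_n(G)$ need not be a group at all. The correct route is the elementwise argument of Step 1 together with the standalone group hypothesis, which is needed only in Step 2.
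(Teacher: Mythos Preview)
Your proof is correct and takes essentially the same approach as the paper's: the paper's argument is exactly your Step~1, showing $(\Pi_n \circ g \circ \iota_n)(\Pi_n(z)) \in \Pi_n(K)$ via $\pi_n(z) \in K$ and $G$-invariance of $K$. The paper stops there and declares $\Pi_n(K)$ to be $\Sigma_n(G)$-invariant without your Step~2, so your write-up is slightly more careful in making the upgrade to equality explicit.
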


\begin{proof}
	Let $\Pi_n \circ g \circ \iota_n \in \Pi_n (G)$ and $\Pi_n (z) \in \Pi_n (K)$ be given. As the element $\pi_n (z)$ belongs to $K$, $g (\pi_n (z)) = z'$ for some $z' \in K$; hence 
	\begin{equation*}	
	(\Pi_n \circ g \circ \iota_n) (\Pi_n (z)) = \Pi_n (g(\pi_n (k)) ) = \Pi_n (z') \in \Pi_n (K).
	\end{equation*}
	This shows that the set $\Pi_n (K)$ is a $\Sigma_n (G)$-invariant set.
\end{proof}

\begin{theorem}\label{separation_schauder}
	Let $X$ a complex Banach space with Schauder basis, $G$ be a group of $\Lin (X)$ that acts separately continuously on the complex Banach space $X$ and $K$ a $G$-invariant set in $X$. If there exists a subsequence $(j_n) \subset \N$ such that 
	\vspace{-1mm}
	\begin{enumerate}
		\setlength\itemsep{0.4em}
		\item $\Sigma_{j_n} (G)$ is a compact subgroup for each $n \in \N$,
		\item $g(\spann \{e_i : i \geq j_n +1 \} ) \subset \spann \{e_i :i \geq j_n +1 \}$ for every $n \in \N$ and $g \in G$,
		\item $\pi_{j_n} (K) \subset K$,
	\end{enumerate} 
	and if an element $z$ in $X \setminus K$ can be separated from $K$ by a continuous polynomial $Q$, which satisfies that for some $r \in (0,1)$ 
	\begin{equation}\label{condition_separation_Schuader}
	\# \{ Arg\left( (Q \circ \iota_{j_n}) (\gamma(z) )\right) \in [0,2\pi) : |(Q \circ \iota_{j_n}) (\gamma(z) )| \geq r, \, \gamma \in \Sigma_{j_n} (G) \} \,\, \text{is finite for each} \,\, n \in \N,
	\end{equation}
	 then there exists a $G$-invariant continuous polynomial $P$ that separates $z$ and $K$. Furthermore, if $Q$ is homogeneous, then $P$ can be chosen to be homogeneous. 
\end{theorem}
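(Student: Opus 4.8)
The plan is to reduce to the finite-dimensional complex case already settled in Theorem~\ref{complex_separation}: cut $X$ down to the coordinate section $\C^{j_n}$ for a suitably large $n$, solve the separation problem there for the compact group $\Sigma_{j_n}(G)$, and then pull the resulting invariant polynomial back to $X$ via $\Pi_{j_n}$. First I would rescale $Q$ by a positive constant so that, exactly as in the proof of Theorem~\ref{complex_separation}, $\sup_{w\in K}|Q(w)|\le r<1<|Q(z)|$, with $r$ the number appearing in \eqref{condition_separation_Schuader}. Since $(e_n)$ is a Schauder basis, $\pi_{j_n}(z)\to z$, hence $Q(\pi_{j_n}(z))\to Q(z)$ by continuity of $Q$; fix $n$ so large that $|Q(\pi_{j_n}(z))|>1$, and set
\[
\tilde Q:=Q\circ\iota_{j_n},\qquad \tilde z:=\Pi_{j_n}(z),\qquad \tilde K:=\Pi_{j_n}(K),
\]
all defined over $\C^{j_n}$.

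Next I would check that the data $(\C^{j_n},\Sigma_{j_n}(G),\tilde K,\tilde z,\tilde Q)$ satisfies the hypotheses of Theorem~\ref{complex_separation}. The group $\Sigma_{j_n}(G)$ is compact by~(1), and as a compact subgroup of $\Lin(\C^{j_n})$ it acts (separately) continuously on the finite-dimensional space $\C^{j_n}$. The set $\tilde K$ is $\Sigma_{j_n}(G)$-invariant by Lemma~\ref{projected_invariant_group}, whose hypotheses hold by~(1) and~(3). The polynomial $\tilde Q$ separates $\tilde z$ from $\tilde K$: for $w\in K$ one has $\iota_{j_n}(\Pi_{j_n}(w))=\pi_{j_n}(w)\in K$ by~(3), so $|\tilde Q(\Pi_{j_n}(w))|=|Q(\pi_{j_n}(w))|\le r$, whereas $|\tilde Q(\tilde z)|=|Q(\pi_{j_n}(z))|>1$; in particular $|\tilde Q(\tilde z)|>1$. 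Finally, the finiteness requirement \eqref{condition_complex_separation} for $\tilde Q$ with $\eta=r$ is precisely \eqref{condition_separation_Schuader}. Hence Theorem~\ref{complex_separation} produces a $\Sigma_{j_n}(G)$-invariant continuous polynomial $\tilde P$ on $\C^{j_n}$ with $\sup_{\tilde K}|\tilde P|<|\tilde P(\tilde z)|$, which may be taken homogeneous when $\tilde Q$, equivalently $Q$, is.

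Then I would lift $\tilde P$ by setting $P:=\tilde P\circ\Pi_{j_n}$, a continuous polynomial on $X$ (homogeneous of the same degree as $\tilde P$ when $\tilde P$ is, since $\Pi_{j_n}$ is linear). Separation is immediate: for $w\in K$ we have $\Pi_{j_n}(w)\in\tilde K$, so $|P(w)|=|\tilde P(\Pi_{j_n}(w))|\le\sup_{\tilde K}|\tilde P|<|\tilde P(\tilde z)|=|P(z)|$. For $G$-invariance, take $x\in X$ and $g\in G$ and write $x=\pi_{j_n}(x)+y$ with $y=x-\pi_{j_n}(x)\in\overline{\spann\{e_i:i\ge j_n+1\}}$. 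By~(2) and continuity of $g$, $g(y)\in\overline{\spann\{e_i:i\ge j_n+1\}}$, hence $\Pi_{j_n}(g(y))=0$, and therefore, using $\pi_{j_n}=\iota_{j_n}\circ\Pi_{j_n}$,
\[
\Pi_{j_n}(g(x))=\Pi_{j_n}\big(g(\pi_{j_n}(x))\big)=(\Pi_{j_n}\circ g\circ\iota_{j_n})\big(\Pi_{j_n}(x)\big).
\]
Since $\Pi_{j_n}\circ g\circ\iota_{j_n}\in\Sigma_{j_n}(G)$ and $\tilde P$ is $\Sigma_{j_n}(G)$-invariant, $P(g(x))=\tilde P\big((\Pi_{j_n}\circ g\circ\iota_{j_n})(\Pi_{j_n}(x))\big)=\tilde P(\Pi_{j_n}(x))=P(x)$, so $P$ is $G$-invariant, which finishes the argument.

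The delicate step is the $G$-invariance of the lift: hypothesis~(2) is exactly what prevents $g$ from feeding the tail part $y$ of $x$ back into the first $j_n$ coordinates, and one then genuinely needs $\Pi_{j_n}\circ g\circ\iota_{j_n}$ to land inside $\Sigma_{j_n}(G)$ in order to invoke the invariance of $\tilde P$. The other point requiring care is securing $|Q(\pi_{j_n}(z))|>1$ for large $n$ --- this is where the Schauder basis and the continuity of $Q$ enter --- together with the bookkeeping that the finiteness threshold in \eqref{condition_separation_Schuader} can be aligned with the normalisation $\sup_{w\in K}|Q(w)|\le r<1$.
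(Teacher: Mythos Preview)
Your proof is correct and follows essentially the same route as the paper's: project to $\C^{j_n}$ via $\Pi_{j_n}$, invoke Lemma~\ref{projected_invariant_group} and Theorem~\ref{complex_separation} on the projected data $(\tilde Q,\tilde z,\tilde K,\Sigma_{j_n}(G))$, and lift the resulting invariant polynomial back as $P=\tilde P\circ\Pi_{j_n}$, checking $G$-invariance through hypothesis~(2). If anything, your write-up is slightly more explicit than the paper's in two places---the use of the Schauder basis and continuity of $Q$ to secure $|Q(\pi_{j_n}(z))|>1$, and the passage to the closed span (via continuity of $g$) when killing the tail under $\Pi_{j_n}$.
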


\begin{proof}
	Suppose that $\sup_{w\in K} |Q(w)| < r < |Q(z)|$ for some $r >0$. Choose $n \in \N$ such that $|Q( \pi_{j_n} (z))| > r$. Note from $\sup_{w \in K} |(Q\circ \pi_{j_n})(w)| < r$ that the polynomial $Q \circ \iota_{j_n}$ separates the point $\Pi_{j_n} (z) \in \C^{j_n} $ and the set $\Pi_{j_n} (K) \subset \C^{j_n}$. Note from Lemma \ref{projected_invariant_group} that the set $\Pi_{j_n} (K)$ is invariant under the action of the compact group $\Sigma_{j_n} (G)$. 
	By Theorem \ref{complex_separation}, there exists a $\Sigma_{j_n} (G)$-invariant continuous polynomial $\widetilde{P}$ (which is homogeneous whenever $Q$ is) that separates $\Pi_{j_n} (z)$ and $\Pi_{j_n} (K)$. Define $P := \widetilde{P} \circ \Pi_{j_n} $. Then $P$ separates the point $z$ and the set $K$. Observe that 
	\begin{align*}
	(P\circ g)(z) = (\widetilde{P} \circ \Pi_{j_n}) (g (z) ) &= (\widetilde{P} \circ \Pi_{j_n}) \left( (g \circ \iota_{j_n}) (\Pi_{j_n} (z)) + \sum_{k={j_n +1}}^{\infty} z_k g( e_k ) \right) \\
	&= (\widetilde{P} \circ \Pi_{j_n} \circ g \circ \iota_{j_n}) (\Pi_{j_n} (z)) \\
	&= (\widetilde{P} \circ \Pi_{j_n}) (z) \\
	&= P(z)
	\end{align*}
	for every $g \in G$ and $z = \sum_{j=1}^{\infty} z_j e_j \in X$. This implies that $P$ is a $G$-invariant continuous polynomial.
\end{proof}

It is clear that if $\Sigma_{j_n} (G)$, in the previous theorem, is a finite group for each $n \in \N$, then the condition \eqref{condition_separation_Schuader} is true automatically for any continuous polynomial $Q$. 

\begin{cor}\label{cor_separation_schauder}
			Let $X$ a complex Banach space with Schauder basis, $G$ be a group of $\Lin (X)$ that acts continuously on the complex Banach space $X$ and $K$ a $G$-invariant set in $X$. If there exists a subsequence $(j_n) \subset \N$ such that 
	\vspace{-1mm}
	\begin{enumerate}
		\setlength\itemsep{0.4em}
		\item $\Sigma_{j_n} (G)$ is a finite group for each $n \in \N$,
		\item $g(\spann \{e_i : i \geq j_n +1 \} ) \subset \spann \{e_i :i \geq j_n +1 \}$ for every $n \in \N$ and $g \in G$,
		\item $\pi_{j_n} (K) \subset K$,
	\end{enumerate} 
	and if an element $z$ in $X \setminus K$ can be separated from $K$ by a continuous polynomial $Q$, then there exists a $G$-invariant continuous polynomial $P$ that separates $z$ and $K$. Furthermore, if $Q$ is homogeneous, then $P$ can be chosen to be homogeneous. 
\end{cor}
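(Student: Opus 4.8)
The plan is to deduce this directly from Theorem \ref{separation_schauder}, the point being that finiteness of each $\Sigma_{j_n}(G)$ makes the extra hypothesis \eqref{condition_separation_Schuader} of that theorem automatic. First I would observe that the hypotheses match up: a finite group is in particular a compact group, so condition (1) of Corollary \ref{cor_separation_schauder} implies condition (1) of Theorem \ref{separation_schauder}; conditions (2) and (3) are verbatim the same; and ``acts continuously'' implies ``acts separately continuously'', so the standing assumption on the action is satisfied as well.

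Next I would verify \eqref{condition_separation_Schuader} for the given separating polynomial $Q$. Pick $r \in (0,1)$ with $\sup_{w\in K}|Q(w)| < r < |Q(z)|$ (after the usual normalization). For each fixed $n \in \N$, the set
\[
\{ Arg\left((Q\circ \iota_{j_n})(\gamma(z))\right) \in [0,2\pi) : |(Q\circ\iota_{j_n})(\gamma(z))| \geq r,\ \gamma \in \Sigma_{j_n}(G)\}
\]
is contained in $\{ Arg\left((Q\circ\iota_{j_n})(\gamma(z))\right) : \gamma \in \Sigma_{j_n}(G)\}$, whose cardinality is at most $\#\Sigma_{j_n}(G) < \infty$ by assumption. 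Hence \eqref{condition_separation_Schuader} holds for every $n \in \N$, so all the hypotheses of Theorem \ref{separation_schauder} are in force.

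Finally I would invoke Theorem \ref{separation_schauder} to produce a $G$-invariant continuous polynomial $P$ separating $z$ and $K$, which is homogeneous whenever $Q$ is homogeneous, as that property is inherited through the construction in the proof of Theorem \ref{separation_schauder}. There is no genuine obstacle here: the corollary is a routine specialization, and the only step needing a line of argument is the cardinality bound above, which is immediate from $\Sigma_{j_n}(G)$ being finite.
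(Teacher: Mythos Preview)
Your proposal is correct and matches the paper's approach exactly: the paper simply remarks, immediately before stating the corollary, that if each $\Sigma_{j_n}(G)$ is finite then condition \eqref{condition_separation_Schuader} holds automatically for any continuous polynomial $Q$, and the corollary follows from Theorem \ref{separation_schauder}. Your cardinality bound $\#\{\ldots\} \leq \#\Sigma_{j_n}(G) < \infty$ is precisely this observation made explicit.
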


\section{Separation theorems for classical groups}

In this section we study separation theorems for several classical groups which in general are not compact.

Given a Banach space $X$ with Schauder basis $(e_j)_{j=1}^\infty$, it can be associated to a Banach sequence space $\tilde X=\psi(X)$ where $\psi:X\mapsto \mathbb K{^\mathbb N}$ is given by $\psi(\sum_{j=1}^\infty x_je_j)=(x_j)_j$ and $\tilde x=\psi(x)$. Moreover, if we define $\Vert (x_j)_j\Vert:=\Vert \sum_j x_je_j\Vert$ then $\psi$ is an isometric isomorphism from $X$ onto $\tilde X$. For this reason we will work in this section in the more general setting of Banach sequence spaces.

\subsection{ Roots of unity.}
\label{sec:roots}

	Let $X$ be a Banach sequence space. Consider the group $R$ consisting of the operators
	\[
	\gamma (z) = (z_1,e^{2\pi ik_{2}/2} z_{2}, \ldots, e^{2\pi ik_{m}/m} z_m, \ldots ) \quad (k_{2},k_{3}, \ldots\in \mathbb N\cup\{0\}).
	\]
	for $z = (z_m)_{m\in \N}$, or the subgroup $R_{F}$ of $R$ generated by $\{\gamma_m\}_{m\in\N}$, where 
	\[
	\gamma_m (z) = (z_1, \ldots, z_{m-1}, e^{2\pi i/m} z_m, z_{m+1}, \ldots ).
	\]
	Each group can be endowed with natural topologies of $\Lin (X)$. Note that for instance if $X$ is the Banach space $c_0$, then the group $R$ is compact in the weak operator topology of $\Lin (c_0)$. However neither $R$ nor the subgroup $R_F$ are compact in the strong operator topology of $\Lin (c_0)$ (so, in the norm topology of $\Lin (c_0)$). 
		
	 The action of the subgroup $R_{F}$ on the spaces $\mathbb C^{n}$ and $c_{0}$ was studied in \cite[Section 4]{AGPZ} where the authors show that any $k$-homogeneous polynomial $Q$ on $\mathbb C^{n}$ that is invariant under the action of $R_F$ can be written as $Q(z_{1},z_{2},\ldots,z_{n})=\tilde Q(z_{1},z_{2}^{2},\ldots,z_{n}^{n})$ where $\tilde Q$ is a polynomial defined on $\mathbb C^{n}$. The analogous happens on $c_{0}$ where \cite[Corollary 4.3.]{AGPZ} shows that any $k$-homogeneous $R_F$-invariant polynomial $Q$ can be written as $Q(z_{1},z_{2},\ldots,z_{n}, z_{n+1},\ldots)=\tilde Q(z_{1},z_{2}^{2},\ldots,z_{n}^{n})$ for some natural number $n$ and $\tilde Q$ is a polynomial defined on $\mathbb C^{n}$.
		
	Assume that $R$ acts continuously on a space $X$ with 1-unconditional Schauder basis, such as $c_0$, $\ell_p$ (for $1\leq p<\infty$), Lorentz sequence spaces or Orlicz sequence spaces. Let us denote by $\widetilde{R}$ the group generated by $\tilde{\gamma} : \C^n \rightarrow \C^n$ defined as 
	\[
	\tilde{\gamma} (w_1, \ldots, w_n) = (w_1, e^{2\pi ik_{2}/2} w_{2}, \ldots, e^{2\pi i k_{n}/n} w_n ), \quad (w_1,\ldots,w_n) \in \C^n 
	\] 
	for some $k_2, \ldots, k_n \in \N \cup \{0\}$. If $\gamma \in R$ is such that 
	$\gamma (z) = (z_1,e^{2\pi ik_{2}/2} z_{2}, \ldots, e^{2\pi ik_{m}/m} z_m, \ldots )$, then
	\[
	(\Pi_n \circ \gamma \circ \iota_n) (w) = \tilde{\gamma} (w)	
	\]
	for every $w=(w_1,\ldots,w_n) \in \C^n$. This shows that $\Sigma_n (R)$ is a finite group contained in $\Lin (\C^n)$ for every $n \in \N$. Moreover, it is clear that $\gamma (\spann \{ e_i : i \in I\}) \subset \spann \{ e_i : i \in I\}$ for every $\gamma \in R$ and $I \subset \N$. Thus, the group $R$ and its subgroup $R_F$ satisfy the conditions in Corollary \ref{cor_separation_schauder}, so we have the following result.

\begin{prop} 
Let $X$ be a Banach sequence space such that its canonical basis is 1-unconditional Schauder basis. If a set $K$ is invariant under the group $R$ (resp. $R_{F}$) with $\pi_{j_n} (K) \subset K$ for some subsequence $(j_n) \subset \N$ and $z$ is an element in $X \setminus K$ that can be separated from $K$ by a continuous polynomial $Q$, then there exists a $R$-invariant (resp. $R_{F}$-invariant) continuous polynomial $P$ that separates $z$ and $K$. Furthermore, if $Q$ is homogeneous, then $P$ can be chosen to be homogeneous. 	
\end{prop}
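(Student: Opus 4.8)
The plan is to verify that the hypotheses of Corollary \ref{cor_separation_schauder} are met by $R$ (and by $R_F$), so that the conclusion follows immediately. Thus the whole proof reduces to checking conditions (1), (2), (3) of that corollary for the subsequence $(j_n)$ provided in the statement, together with the separation hypothesis on $Q$, which is given to us. The separate continuity of the action on $X$ should first be noted: since each $\gamma \in R$ acts coordinatewise by multiplication by unimodular scalars and the canonical basis is $1$-unconditional, each $\gamma$ is a well-defined isometry of $X$, and for fixed $z$ the map $\gamma \mapsto \gamma(z)$ is continuous for the relevant topology on $R$ (this is exactly the content of the assumption ``$R$ acts continuously on $X$'').

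For condition (1), I would recall the computation already carried out in the paragraph preceding the statement: for $\gamma \in R$ with $\gamma(z) = (z_1, e^{2\pi i k_2/2} z_2, \ldots)$ one has $(\Pi_n \circ \gamma \circ \iota_n)(w) = \tilde\gamma(w)$ for all $w \in \C^n$, where $\tilde\gamma \in \widetilde R$. Hence $\Sigma_n(R) = \widetilde R$, and $\widetilde R$ is a group because it is generated by the finite-order diagonal operators $\tilde\gamma_m$ ($2 \le m \le n$), each of order $m$; in fact $\widetilde R$ is the finite abelian group $\prod_{m=2}^n \Z/m\Z$, so $|\Sigma_n(R)| = \prod_{m=2}^n m = n!/1$ is finite. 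The same computation with $R_F$ in place of $R$ shows $\Sigma_n(R_F) = \Sigma_n(R)$ is finite as well, since the generators $\gamma_m$ for $m > n$ restrict to the identity on $\C^n$ and those for $m \le n$ restrict to $\tilde\gamma_m$. Therefore condition (1) holds for \emph{every} $n \in \N$, in particular for the given subsequence $(j_n)$.

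For condition (2), I would observe that every $\gamma \in R$ (and hence every element of $R_F$) leaves invariant $\spann\{e_i : i \in I\}$ for an arbitrary $I \subseteq \N$, since $\gamma$ acts diagonally: $\gamma(e_i) = \lambda_i e_i$ for suitable $\lambda_i \in \T$. Taking $I = \{i : i \ge j_n + 1\}$ gives (2) at once. Condition (3), $\pi_{j_n}(K) \subset K$, is part of the hypothesis of the proposition, so nothing needs to be checked. With (1)--(3) verified and the separation hypothesis on $Q$ in hand, Corollary \ref{cor_separation_schauder} applies verbatim and yields an $R$-invariant (resp.\ $R_F$-invariant) continuous polynomial $P$ separating $z$ and $K$, homogeneous if $Q$ is.

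There is no serious obstacle here; the proposition is essentially a bookkeeping exercise matching the structure of $R$ and $R_F$ against the abstract hypotheses of Corollary \ref{cor_separation_schauder}. The one point that warrants a sentence of care is the identification $\Sigma_n(R) = \widetilde R$ and the verification that this set is closed under composition and inverses (so that it is genuinely a group, not merely a subset of $\Lin(\C^n)$ as the general warning before Lemma \ref{projected_invariant_group} cautions) --- but this is transparent because all the operators involved are diagonal with roots of unity on the diagonal, so composition multiplies the diagonal entries and stays within the same finite family. Once that is said, invoking the corollary closes the argument.
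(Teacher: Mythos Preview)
Your proposal is correct and follows exactly the paper's approach: the paper does not give a separate proof of this proposition but instead, in the paragraph immediately preceding it, verifies that $\Sigma_n(R)=\widetilde{R}$ is a finite group and that each $\gamma\in R$ leaves $\spann\{e_i:i\in I\}$ invariant, then invokes Corollary~\ref{cor_separation_schauder}. Your write-up simply spells out these verifications (including the same diagonal-action observation for condition~(2) and the computation of $\Sigma_n(R)$ for condition~(1)) in slightly more detail, so there is nothing to add or correct.
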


The natural $R$-invariant set to consider on a Banach sequence space is the unit ball of the space, for which the following separation theorem holds.

\begin{cor}
\label{S-separation-from-the-ball}
	 Let $X$ be a Banach sequence space such that its canonical basis is 1-unconditional Schauder basis such that $\overline{B_X}$ is $R$-invariant (or $R_{F}$-invariant). If $z$ is an element outside $\overline{B_X}$, then $z$ can be separated from $\overline{B_X}$ by a homogeneous $R$-invariant (or $R_{F}$-invariant) continuous polynomial. 
\end{cor}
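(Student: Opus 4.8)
The plan is to obtain this corollary as an immediate application of the previous Proposition (which itself rests on Corollary \ref{cor_separation_schauder}), taking the $G$-invariant set to be $K=\overline{B_X}$ for $G=R$ (resp.\ $G=R_F$). So the task reduces to verifying, for this particular $K$, the two structural hypotheses of that proposition --- invariance together with $\pi_{j_n}(K)\subset K$ along some subsequence --- and exhibiting a continuous polynomial that already separates $z$ from $\overline{B_X}$.

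For the structural hypotheses: invariance of $\overline{B_X}$ under $R$ (resp.\ $R_F$) is part of the assumption, so only $\pi_{j_n}(\overline{B_X})\subset\overline{B_X}$ needs checking. Here I would use that the canonical basis of $X$ is $1$-unconditional: this forces each coordinate truncation to be norm-nonincreasing, i.e.\ $\|\pi_n(x)\|\leq\|x\|$ for all $x\in X$ and $n\in\N$, whence $\pi_n(\overline{B_X})\subset\overline{B_X}$ for every $n$. Thus the hypothesis holds with $j_n=n$.

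For the separating polynomial: given $z\in X\setminus\overline{B_X}$, since $\overline{B_X}$ is closed, convex and balanced, the Hahn--Banach separation theorem provides $f\in X^{*}$ with $\sup_{w\in\overline{B_X}}\re f(w)<\re f(z)$; using that $\overline{B_X}$ is balanced this improves to $\sup_{w\in\overline{B_X}}|f(w)|=\|f\|<|f(z)|$, so $Q:=f$ is a $1$-homogeneous continuous polynomial separating $z$ and $\overline{B_X}$.

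With all hypotheses in place, the previous Proposition produces an $R$-invariant (resp.\ $R_F$-invariant) continuous polynomial $P$ separating $z$ and $\overline{B_X}$, and --- since $Q$ is homogeneous --- it may be taken homogeneous. There is essentially no obstacle to overcome; if anything, the one point deserving emphasis is that $1$-unconditionality is precisely what guarantees $\pi_n(\overline{B_X})\subset\overline{B_X}$, which is the hypothesis of the proposition that is not entirely automatic.
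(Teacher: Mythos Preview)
Your proposal is correct and follows exactly the approach the paper intends: the corollary is stated without proof in the paper, as it is meant to be an immediate application of the preceding proposition with $K=\overline{B_X}$, using $1$-unconditionality to get $\pi_n(\overline{B_X})\subset\overline{B_X}$ and Hahn--Banach to supply the initial homogeneous separating polynomial. There is nothing to add.
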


\subsection{Groups of permutations}
\label{permutations}
The set of polynomials invariant under the group of permutations of the natural numbers has been intensively studied on the spaces $c_{0}$ and $\ell_{p}$ for $1 \leq p < \infty$ and in a more general setting on Banach sequence spaces, see for instance \cite{CMS,Gar,GGJ,NS} and the references therein.
Let us denote by $\mathcal{G}$ the group of all permutations on the natural numbers. Among other results, it is observed in \cite[Theorem 1.1]{GGJ} that the polynomials $F_k$ on $\ell_p$ given by 
\begin{equation}
\label{basicpolynomials}
F_k (x) = \sum_{j=1}^{\infty} x_j^k \quad (x=(x_j)_{j=1}^{\infty} \in \ell_p),
\end{equation}
for $k = \lceil p \rceil, \lceil p \rceil +1, \ldots$, where $\lceil p \rceil$ is the smallest integer that is greater than or equal than $p$, form an algebraic basis of the algebra of all $\mathcal{G}$-invariant polynomials on $\ell_p$. Such polynomials $F_k$ are called \emph{elementary $\mathcal{G}$-invariant polynomials}. Moreover, a polynomial on the space $\ell_p$ for $1 \leq p < \infty$, is $\mathcal{G}$-invariant if and only if it is symmetric with respect to the subgroup $\mathcal{G}_0:= \bigcup_{n\in\N} \sym (\{1,\ldots, n\})$, where $\sym (\{1,\ldots, n\})$ denotes the set of permutations $\sigma$ of the natural numbers such that $\sigma(j)=j$ for $j>n$ (see \cite{GGJ}).

By using similar arguments to the ones used in the proof of Theorem \ref{complex_separation} we can obtain the following result. 

\begin{prop}\label{prop:permutations:ell_p}
Set $1 \leq p < \infty$. If $z=(z_j)_{j=1}^{\infty}\in \ell_p$ is such that 
 for some $j_0\in\mathbb N$, $\vert z_{j_0}\vert>1$, then $z$ can be separated from $\overline{B_{\ell_p}}$ by an elementary $\mathcal{G}$-invariant polynomial.
\end{prop}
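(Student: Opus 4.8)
The plan is to show that a single elementary $\mathcal G$-invariant polynomial $F_k$ from \eqref{basicpolynomials}, for a suitably large $k \ge \lceil p \rceil$, already separates $z$ and $\overline{B_{\ell_p}}$. The easy half is the uniform bound on the ball: if $w \in \overline{B_{\ell_p}}$ then $|w_j| \le 1$ for every $j$, hence $|w_j|^k \le |w_j|^p$ as soon as $k \ge p$, so $|F_k(w)| \le \sum_{j} |w_j|^k \le \sum_j |w_j|^p \le 1$. Thus $\sup_{w \in \overline{B_{\ell_p}}} |F_k(w)| \le 1$ for every $k \ge \lceil p \rceil$, and the whole point is to exhibit one such $k$ with $|F_k(z)| > 1$.

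For this I would focus on the largest coordinates of $z$. Since $z \in \ell_p$ we have $|z_j| \to 0$, so $M := \sup_j |z_j|$ is attained and $M \ge |z_{j_0}| > 1$; set $C' := \{ j : |z_j| = M \}$, a finite nonempty set, and write $z_j = M e^{i\theta_j}$ for $j \in C'$. Applying Lemma \ref{lem_AFM} to the unimodular numbers $\{ e^{i\theta_j} : j \in C' \}$ yields a strictly increasing sequence $(m_k)_{k}$ of natural numbers with $e^{i m_k \theta_j} \to 1$ for every $j \in C'$. I would then estimate $M^{-m_k} F_{m_k}(z)$ by splitting $\sum_j z_j^{m_k}$ into the three blocks $\{ j \in C' \}$, $\{ j : 1 \le |z_j| < M \}$ (finite, since $|z_j| \to 0$) and $\{ j : |z_j| < 1 \}$: the first contributes $\sum_{j \in C'} e^{i m_k \theta_j} \to \# C' \ge 1$; the second is a finite sum of terms of modulus $(|z_j|/M)^{m_k} \to 0$; and the modulus of the third is at most $M^{-m_k} \sum_{j : |z_j| < 1} |z_j|^{m_k}$, which tends to $0$ because $M > 1$ (for $m_k \ge p$ it is bounded by $M^{-m_k}\sum_j|z_j|^p$). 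Hence $M^{-m_k} |F_{m_k}(z)| \to \# C' \ge 1$, so $|F_{m_k}(z)| \ge \tfrac12 M^{m_k} \to \infty$. Fixing $k$ large enough that $m_k \ge \lceil p \rceil$ and $|F_{m_k}(z)| > 1$, the elementary $\mathcal G$-invariant polynomial $F_{m_k}$ satisfies $\sup_{w \in \overline{B_{\ell_p}}} |F_{m_k}(w)| \le 1 < |F_{m_k}(z)|$, i.e. it separates $z$ and $\overline{B_{\ell_p}}$.

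The one genuine obstacle --- and the reason Lemma \ref{lem_AFM} enters, exactly as in the proof of Theorem \ref{complex_separation} --- is the possible destructive interference among the dominant terms $z_j^{m_k}$, $j \in C'$: without phase control $F_{m_k}(z)$ might not grow at all. Aligning the arguments $m_k\theta_j$ to $0$ along $(m_k)$ removes this, after which the contribution of the remaining coordinates is a routine $o(M^{m_k})$ tail estimate; everything else is elementary. In the real case one can avoid Lemma \ref{lem_AFM} altogether by simply taking the $m_k$ to be even integers $\ge \lceil p \rceil$.
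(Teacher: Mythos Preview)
Your argument is correct and follows the same overall strategy as the paper: bound $|F_k|$ by $1$ on $\overline{B_{\ell_p}}$ for $k\ge p$, then use Lemma~\ref{lem_AFM} to align the arguments of finitely many ``dominant'' coordinates along a subsequence $(m_k)$ and show $|F_{m_k}(z)|\to\infty$. The difference is in the decomposition. The paper simply fixes $N\ge j_0$ with $\sum_{j>N}|z_j|^p<1$, aligns the phases of \emph{all} of $z_1,\dots,z_N$ via Lemma~\ref{lem_AFM}, and estimates
\[
|F_{n_k}(z)|\ \ge\ \tfrac12\sum_{j=1}^N|z_j|^{n_k}-\sum_{j>N}|z_j|^{n_k}\ \ge\ \tfrac12|z_{j_0}|^{n_k}-1\ \longrightarrow\ \infty.
\]
Your three-block split (coordinates of maximal modulus $M$, coordinates with $1\le|z_j|<M$, and coordinates with $|z_j|<1$) together with the normalization by $M^{m_k}$ is a little sharper in that you only need to align the phases of the coordinates in $C'$, and you extract the exact asymptotic $M^{-m_k}|F_{m_k}(z)|\to\#C'$; the price is a slightly more elaborate bookkeeping. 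Either decomposition works, and neither avoids the essential use of Lemma~\ref{lem_AFM} in the complex case. Your closing remark that in the real case even exponents suffice is also in line with the spirit of Section~\ref{sec:real}.
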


\begin{proof} 
Since $z\in \ell_p$, there exists a natural number $N\geq j_0$ such that $\sum_{j>N}\vert z\vert^p<1$. Let $\theta_j \in [0,2\pi)$ be so that $z_j = |z_j| e^{i \theta_j}$ for each $j =1,\ldots, N$.
Using Lemma \ref{lem_AFM}, we may choose an increasing sequence of natural numbers $(n_k)_{k=1}^\infty$ such that $\sup_{j=1,\ldots,N} |e^{i \theta_j n_k} - 1 | <\frac{1}{2}$ for every $k \in \N$. Observe that 
\begin{align*}
|F_{n_k} (z) | = \left| \sum_{j=1}^{\infty} z_j^{n_k} \right| &= \left| \sum_{j=1}^{N} z_j^{n_k} +  \sum_{j>N} z_j^{n_k} \right| \\
&\geq \sum_{j=1}^{N} |z_j|^{n_k} - \sup_{j=1,\ldots,N} |1- e^{i \theta_j n_k}| \sum_{j=1}^{N} |z_j|^{n_k} - \sum_{j > N} |z_j|^{n_k}.
\end{align*} 
For sufficiently large $n_k$ and bigger than $p$, we have 
\begin{align*}
|F_{n_k} (z) | 
 \geq \frac{1}{2}|z_{j_0}|^{n_k} -\sum_{j>N} |z_j|^{n_k} \geq \frac{1}{2}|z_{j_0}|^{n_k} - 1\xrightarrow[k \rightarrow \infty]{} \infty. 
\end{align*} 
As $|F_{n_k} (x)| \leq 1$ for every $x \in \overline{B_{\ell_p}}$, we complete the proof. 
\end{proof}

\begin{rem}
If $z=(z_j)_j\notin \overline{B_{\ell_p}}$ but $\vert z_j\vert\leq 1$ for every $j \in \N$, nothing can be said in general. That is, the point $z$ can be or cannot be separated by an elementary $\mathcal{G}$-invariant polynomial. For example, if $p \in \N$ and $z_j\geq0$ for every $j\in \N$, then taking $F_p (x)=\sum_{n=1}^\infty x_n^p$, we will have $F_p (z) > 1 \geq \Vert x\Vert \geq F_p(x)$ for every $x\in \overline{B_{\ell_p}}$. Hence, the point $z$ can be separated from $\overline{B_{\ell_p}}$ by the elementary $\mathcal{G}$-invariant polynomial $F_p$.

On the other hand, if $p\notin \N$ there always exists $z=(z_j)_j\notin \overline{B_{\ell_p}}$, $z_j>0$ for all natural number $j$ that cannot be separated from $\overline{B_{\ell_p}}$ by any elementary $\mathcal{G}$-invariant polynomial. Indeed, let $m\in\mathbb N$ be chosen so that $m < p < m+1$, that is, $\lceil p \rceil = m+1$. Consider the sequence 
\[
z=\left(\frac{1}{2^{\frac{1}{m+1}} }, \ldots, \frac{1}{2^{\frac{n}{m+1}}}, \ldots \right) \in \ell_p;
\]
then 
\[
\|z\|_p= \frac{1}{2^{\frac{p}{m+1}} - 1} > 1.
\]
However, if $F_k(x)=\sum_{n=1}^\infty x_n^k$ with $k \geq m+1$, then 
\[
|F_k (z)| = \frac{1}{2^{ \frac{k}{m+1} } } + \cdots + \frac{1}{2^{ \frac{kn}{m+1} } } + \cdots = \frac{1}{2^{\frac{k}{m+1}} - 1} \leq 1,
\]
which implies that $z$ cannot be separated from $\overline{B_{\ell_p}}$ by elementary $\mathcal{G}$-invariant polynomials. Observe that $F_k$ is not well defined if $k<p$.

\end{rem}

However the group $\mathcal{G}$ of permutations on the natural numbers can be too restrictive in many cases. For instance it was shown in \cite[Example 1.3]{GGJ} that there are no non-null polynomials on $c_{0}$ that are $\mathcal{G}$-invariant. The same happens for several Orlicz sequence spaces and Lorentz sequence spaces, see \cite[examples 1.4 and 1.5]{GGJ}. Also, it was observed in \cite[Theorem 3.2, Theorem 5.5]{GVZ} that the only homogeneous polynomial on $\ell_{\infty}$ or on $L_{\infty} [0, +\infty)$ which is $\mathcal{G}$-invariant is the null polynomial. This motivates us to consider of a smaller group of permutations on the natural numbers.

Let $X$ be a Banach sequence space and $G$ be a subgroup of the group of permutation of the natural numbers. 
Given $\sigma \in G$, we can consider the linear operator $\widehat{\sigma}\in\Lin (X)$ defined as 
\[
\widehat{\sigma} ( x )= ( x_{\sigma(j)} )_{j=1}^{\infty} \quad (x = (x_j)_{j=1}^{\infty} \in X).
\]
Note that as the operator $\widehat{\sigma}$ is well defined, by the closed graph theorem we have that the operator is continuous. We denote by $\widehat{G}$ the set $\{ \widehat{\sigma} : \sigma \in G\}\subset\Lin(X)$.

 
 Fix a strictly increasing sequence of non-negative integers numbers $(n_k)_{n=1}^\infty$ with $n_1=0$. Set $N_k=\{n_k+1,\ldots,n_{k+1}\}$ for $k \in \N$. Consider the group $S$ of permutations of the natural numbers such that $\sigma(N_k)=N_k$ for each $k \in \N$. 
Given $n \in \N$ and $\sigma \in S$, observe that 
\[
(\Pi_{n}\circ \widehat{\sigma} \circ \iota_{n}) (w) = \left(w_{\sigma(1)}, \ldots, w_{\sigma (n)} \right).
\]
This shows that $\Sigma_{n} (\widehat{S})$ is a finite group contained in $\Lin (\C^{n})$ for each $n \in \N$ and Corollary \ref{cor_separation_schauder} yields the following result. 

\begin{prop}\label{S-separation:general_ver}
	Let $X$ be a Banach sequence space such that its canonical basis is a Schauder basis and $\widehat{S}$ defines a separately continuous action on $X$. If a set $K$ is invariant under the group $\widehat{S}$ with $\pi_{j_n} (K) \subset K$ and $z$ is an element in $X \setminus K$ that can be separated from $K$ by a continuous polynomial $Q$, then there exists a $\widehat{S}$-invariant continuous polynomial $P$ that separates $z$ and $K$. Furthermore, if $Q$ is homogeneous, then $P$ can be chosen to be homogeneous. 	
\end{prop}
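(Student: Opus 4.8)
The plan is to deduce this proposition directly from Corollary \ref{cor_separation_schauder} by verifying its three hypotheses for the group $\widehat{S}$ and a suitable subsequence of $\N$. First I would take as the subsequence the tail of the partition markers themselves, i.e. $(j_n)_{n\geq 2}$ with $j_n = n_{n}$ (or simply any subsequence of $\{n_k : k \in \N\}$), since the whole point of the block structure $N_k = \{n_k+1,\dots,n_{k+1}\}$ is that permutations in $S$ respect it. Condition (1) of Corollary \ref{cor_separation_schauder} is the computation already displayed just above the statement: for $\sigma \in S$ one has $(\Pi_{n}\circ \widehat{\sigma}\circ \iota_{n})(w) = (w_{\sigma(1)},\dots,w_{\sigma(n)})$ whenever $n$ is one of the $n_k$'s, because then $\sigma$ maps $\{1,\dots,n\} = N_1 \cup \dots \cup N_{k-1}$ onto itself; hence $\Sigma_{j_n}(\widehat S)$ is literally the (finite) group of those coordinate permutations of $\C^{j_n}$ that preserve each block, a subgroup of the symmetric group on $j_n$ letters, so in particular finite.

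Next, condition (2): for $\sigma \in S$ and any $j_n = n_k$, the operator $\widehat\sigma$ permutes coordinates within blocks, so it sends the coordinate subspace $\spann\{e_i : i \geq n_k+1\} = \spann\{e_i : i \in N_k \cup N_{k+1} \cup \cdots\}$ into itself; indeed $\sigma$ maps $\{n_k+1, n_k+2, \dots\}$ bijectively onto itself because it fixes each $N_j$ setwise. Condition (3), namely $\pi_{j_n}(K) \subset K$, is exactly the hypothesis imposed on $K$ in the statement of the proposition, so there is nothing to prove there. With all three conditions of Corollary \ref{cor_separation_schauder} in hand, and with $z \in X \setminus K$ separated from $K$ by a continuous polynomial $Q$, the corollary immediately produces a $\widehat S$-invariant continuous polynomial $P$ separating $z$ and $K$, and the homogeneity of $P$ when $Q$ is homogeneous is part of the corollary's conclusion as well.

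The only genuinely substantive point — and the one I would state carefully rather than wave through — is the hypothesis that $\widehat S$ acts separately continuously on $X$, which is assumed in the proposition and is needed because Corollary \ref{cor_separation_schauder} (via Theorem \ref{separation_schauder} and Lemma \ref{projected_invariant_group}) requires it; note that each individual $\widehat\sigma$ is automatically continuous by the closed graph theorem as remarked earlier, but joint or separate continuity of the group action is a genuine extra assumption that we simply carry over. So the proof is essentially a one-line invocation: \emph{The conditions of Corollary \ref{cor_separation_schauder} are satisfied by taking $(j_n)$ to be a subsequence of $(n_k)$; condition (1) follows from the displayed identity for $\Pi_n \circ \widehat\sigma \circ \iota_n$ together with the block-preserving property of elements of $S$, condition (2) from the same block-preserving property applied to the tail subspaces, and condition (3) is the hypothesis on $K$. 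Hence Corollary \ref{cor_separation_schauder} applies and yields the desired $\widehat S$-invariant polynomial $P$.} I do not anticipate any real obstacle; the main thing to get right is choosing the subsequence to be (a subsequence of) the block endpoints so that $\Sigma_{j_n}(\widehat S)$ is genuinely a group and not merely a set, which is precisely the subtlety flagged in the discussion preceding Lemma \ref{projected_invariant_group}.
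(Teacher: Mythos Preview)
The proposal is correct and follows exactly the approach of the paper, which simply states that ``Corollary \ref{cor_separation_schauder} yields the following result'' after displaying the identity $(\Pi_n \circ \widehat{\sigma} \circ \iota_n)(w) = (w_{\sigma(1)},\ldots,w_{\sigma(n)})$ and noting that $\Sigma_n(\widehat{S})$ is finite. Your write-up is in fact more careful than the paper's, since you correctly insist that the subsequence $(j_n)$ be chosen among the block endpoints $(n_k)$ so that $\sigma$ genuinely permutes $\{1,\ldots,j_n\}$ and $\Sigma_{j_n}(\widehat{S})$ is a group --- a point the paper glosses over by writing ``for each $n \in \N$''.
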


As before a natural $\widehat{S}$-invariant set to consider on a Banach sequence space is the unit ball of the space. In this case, the analogous to Corollary \ref{S-separation-from-the-ball} also holds.

\begin{cor}
\label{R-separation-from-the-ball}
	 Let $X$ be a Banach sequence space such that its canonical basis is 1-unconditional Schauder basis so that $\widehat{S}$ defines an action on $X$. Assume that $\overline{B_X}$ is $\widehat{S}$-invariant. If $z$ is an element outside $\overline{B_X}$, then $z$ can be separated from $\overline{B_X}$ by a homogeneous $\widehat{S}$-invariant continuous polynomial. 
\end{cor}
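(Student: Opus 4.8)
The plan is to deduce this directly from Proposition~\ref{S-separation:general_ver}, applied to the $\widehat S$-invariant set $K=\overline{B_X}$; the only work is to produce an initial separating polynomial and to check the structural hypothesis $\pi_{j_n}(K)\subset K$.

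First I would manufacture a homogeneous continuous polynomial $Q$ separating $z$ from $\overline{B_X}$. Since $\overline{B_X}$ is closed, convex and balanced and $z\notin\overline{B_X}$, the Hahn--Banach separation theorem furnishes $f\in X^{*}$ with $\sup_{w\in\overline{B_X}}\re f(w)<\re f(z)$; replacing $w$ by $\lambda w$ for a suitable unimodular scalar $\lambda$ and using that $\overline{B_X}$ is balanced upgrades this to $\sup_{w\in\overline{B_X}}|f(w)|<|f(z)|$. Thus $Q:=f$ is a $1$-homogeneous continuous polynomial that separates $z$ and $\overline{B_X}$.

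Next I would record that, because the canonical basis of $X$ is $1$-unconditional, every canonical projection $\pi_n$ is contractive, so $\pi_n(\overline{B_X})\subset\overline{B_X}$ for all $n\in\N$; in particular $\pi_{j_n}(\overline{B_X})\subset\overline{B_X}$ along the trivial subsequence $j_n=n$. Since $\overline{B_X}$ is $\widehat S$-invariant by hypothesis, all the hypotheses of Proposition~\ref{S-separation:general_ver} (with $K=\overline{B_X}$ and the separating polynomial $Q$) are met, so there is a $\widehat S$-invariant continuous polynomial $P$ separating $z$ and $\overline{B_X}$; and as $Q$ is homogeneous, the ``furthermore'' clause of that proposition lets us choose $P$ homogeneous.

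There is no genuine obstacle here---the statement is a routine specialization of Proposition~\ref{S-separation:general_ver}. The only points deserving a word of care are that $1$-unconditionality of the basis is precisely what makes $\pi_n$ map $\overline{B_X}$ into itself, and that balancedness of $\overline{B_X}$ is what converts the real-part separation coming from Hahn--Banach into a genuine modulus separation $\sup_{w\in\overline{B_X}}|Q(w)|<|Q(z)|$ of the form required to invoke the proposition.
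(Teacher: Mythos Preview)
Your proposal is correct and is exactly the (implicit) argument the paper has in mind: the corollary is stated immediately after Proposition~\ref{S-separation:general_ver} as the direct specialization $K=\overline{B_X}$, with Hahn--Banach supplying the homogeneous separating polynomial and $1$-unconditionality guaranteeing $\pi_m(\overline{B_X})\subset\overline{B_X}$. One cosmetic remark: the subsequence $(j_n)$ in Proposition~\ref{S-separation:general_ver} is really the one dictated by the block structure of $S$ (the endpoints $n_{k+1}$), not an arbitrary choice, but since you verify $\pi_m(\overline{B_X})\subset\overline{B_X}$ for every $m$ this is harmless.
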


Notice that due to the fact that the space $\ell_{\infty}$ has no Schauder basis, the above Proposition \ref{S-separation:general_ver} cannot be applied to $\ell_{\infty}$. Meanwhile, a permutation $\sigma$ of the natural numbers is called a \emph{finite bijection} if there is $n \in \N$ such that the restriction of $\sigma$ to the set $\{ n, n+1, \ldots \}$ is the identity map. Let us denote by $\mathcal{G}_f$ the group of all finite bijections on the natural numbers. It is observed in \cite[Theorem 4.3]{GVZ} that a bounded type entire function $f$ on $\ell_{\infty}$ is $\mathcal{G}_f$-invariant if and only if there is a bounded type entire function $\widetilde{f}$ on $\ell_{\infty} / c_0$ such that $f = \widetilde{f} \circ Q$, where $Q$ is the quotient map from $\ell_{\infty}$ to $\ell_{\infty} / c_0$.

\begin{prop}\label{Gf-separation-from-the-ball}
Let $z=(z_j)_{j=1}^{\infty}$ be an element in $\ell_{\infty}$. The point $z$ can be separated from $\overline{B_{\ell_{\infty}}}$ by a $\mathcal{G}_f$-invariant continuous polynomial if and only if 
$\limsup_{j\rightarrow \infty} |z_{j}| > 1$.
\end{prop}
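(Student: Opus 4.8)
The plan is to reduce the whole question to the quotient Banach space $\ell_{\infty}/c_0$. The key structural input is the cited description \cite[Theorem 4.3]{GVZ}: every continuous polynomial on $\ell_{\infty}$ is bounded on bounded sets, hence is a bounded type entire function, so a $\mathcal{G}_f$-invariant continuous polynomial $P$ on $\ell_{\infty}$ can be written as $P=\widetilde{P}\circ Q$, where $Q:\ell_{\infty}\to\ell_{\infty}/c_0$ is the quotient map and $\widetilde{P}$ is a bounded type entire function on $\ell_{\infty}/c_0$; in particular $\widetilde{P}$ is continuous. Alongside this I would record two elementary facts about this quotient: first, that the quotient norm is $\|Q(x)\|=\limsup_{j\to\infty}|x_{j}|$ for $x=(x_j)_{j}\in\ell_{\infty}$; and second, that (by openness of $Q$) the image $Q(\overline{B_{\ell_{\infty}}})$ contains the open unit ball of $\ell_{\infty}/c_0$ and is contained in $\overline{B_{\ell_{\infty}/c_0}}$, hence is dense in $\overline{B_{\ell_{\infty}/c_0}}$.

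\emph{Sufficiency.} Suppose $\limsup_{j\to\infty}|z_{j}|>1$, so that $\|Q(z)\|_{\ell_{\infty}/c_0}>1$. By the Hahn--Banach theorem there is $\phi\in(\ell_{\infty}/c_0)^{*}$ with $\|\phi\|=1$ and $|\phi(Q(z))|=\|Q(z)\|>1$. Set $P:=\phi\circ Q$, a continuous linear functional on $\ell_{\infty}$, in particular a homogeneous continuous polynomial of degree one. For every finite bijection $\sigma$ the vector $\widehat{\sigma}(x)-x$ is finitely supported, hence lies in $c_0$, so $Q\circ\widehat{\sigma}=Q$ and therefore $P\circ\widehat{\sigma}=P$; thus $P$ is $\mathcal{G}_f$-invariant. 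Finally $\sup_{x\in\overline{B_{\ell_{\infty}}}}|P(x)|\leq\|\phi\|\sup_{x\in\overline{B_{\ell_{\infty}}}}\|Q(x)\|\leq 1<|P(z)|$, so $P$ separates $z$ from $\overline{B_{\ell_{\infty}}}$.

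\emph{Necessity.} I would argue by contraposition: assume $\limsup_{j\to\infty}|z_{j}|\leq 1$ and let $P$ be an arbitrary $\mathcal{G}_f$-invariant continuous polynomial on $\ell_{\infty}$, written as $P=\widetilde{P}\circ Q$ with $\widetilde{P}$ continuous as above. Then $\|Q(z)\|=\limsup_{j\to\infty}|z_{j}|\leq 1$, so $Q(z)\in\overline{B_{\ell_{\infty}/c_0}}$. Since $\widetilde{P}$ is continuous and $Q(\overline{B_{\ell_{\infty}}})$ is dense in $\overline{B_{\ell_{\infty}/c_0}}$, one has $\sup_{x\in\overline{B_{\ell_{\infty}}}}|P(x)|=\sup_{y\in\overline{B_{\ell_{\infty}/c_0}}}|\widetilde{P}(y)|\geq|\widetilde{P}(Q(z))|=|P(z)|$, so $P$ does not separate $z$ from $\overline{B_{\ell_{\infty}}}$. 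As $P$ was arbitrary, this is the contrapositive of the necessity.

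The main obstacle is the factorization $P=\widetilde{P}\circ Q$: $\mathcal{G}_f$-invariance only guarantees that $P$ is unchanged under finitely many coordinate transpositions, which is strictly weaker than being constant on the cosets of $c_0$. This passage cannot be obtained by a naive orbit-closure-plus-continuity argument --- for example, the $\mathcal{G}_f$-orbit of $e_1$ is the norm-discrete set $\{e_{n}\}_{n\in\N}$, whose norm closure does not contain $0$ even though $e_1\in c_0$ --- and genuinely relies on the structural result \cite[Theorem 4.3]{GVZ}. Everything else (continuity of $\widetilde{P}$, the identification of the quotient norm with the $\limsup$, and the surjectivity of $Q$ onto the open unit ball) is routine, and is where the short remaining verifications sit.
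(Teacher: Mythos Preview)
Your argument is correct. Both directions go through, and your discussion of why the factorization $P=\widetilde P\circ Q$ is the non-trivial input (and cannot be replaced by orbit-closure) is well taken.

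The route differs from the paper's in execution, though both rest on the same structural fact from \cite{GVZ}. For sufficiency, the paper builds an explicit $\mathcal{G}_f$-invariant linear functional by picking a subsequence $(j_n)$ along which $|z_{j_n}|\to\alpha>1$, choosing a free ultrafilter $\mathcal U$ on $\{j_n\}$, and setting $\varphi(x)=\lim_{\mathcal U}x_i$; you instead invoke Hahn--Banach on $\ell_\infty/c_0$ and pull back via $Q$. Your construction is less explicit but more economical, and automatically highlights that the separating functional is simply a norming functional at $Q(z)$. For necessity, the paper treats the cases $\alpha<1$ and $\alpha=1$ separately: in the strict case it zeroes out the finitely many coordinates of modulus greater than one (a $c_0$-perturbation) to land in $\overline{B_{\ell_\infty}}$, and in the boundary case it scales by $r<1$ and lets $r\to 1$. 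You bypass this case split by observing directly that $Q(z)\in\overline{B_{\ell_\infty/c_0}}$ and that $\sup_{\overline{B_{\ell_\infty}}}|P|$ computes the supremum of $|\widetilde P|$ over $\overline{B_{\ell_\infty/c_0}}$ by density. In fact a small improvement is available here: $Q(\overline{B_{\ell_\infty}})$ is not merely dense in but equal to $\overline{B_{\ell_\infty/c_0}}$ --- given any preimage $x$ of $y$ with $\|y\|\le 1$, the truncation $x'_j=x_j$ when $|x_j|\le 1$ and $x'_j=x_j/|x_j|$ otherwise lies in $\overline{B_{\ell_\infty}}$ and satisfies $x-x'\in c_0$ --- so the density-plus-continuity step can be replaced by equality, though either version suffices.
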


\begin{proof} Put $\alpha = \limsup_{j\rightarrow \infty} |z_{j}|$ and suppose that $\alpha > 1$. Hence, there exists a sequence $(j_n)_{n\in \N} \subset \N$ such that 
\[
|z_{j_n}| \xrightarrow[n \rightarrow \infty]{} \alpha > 1. 
\]
Let $I = \{ j_n : n \in \N\}$ and $\mathcal{U}$ be a free ultrafilter on $I$. Consider the linear functional $\varphi$ on $\ell_{\infty}$ given by 
\[
\varphi (x) = \lim_{\mathcal{U}} x_i \quad (x \in \ell_{\infty}).
\]
As $\varphi (x + y) = \varphi (x)$ for every $x \in \ell_{\infty}$ and $y \in c_0$, we see that $\varphi$ is a $\mathcal{G}_f$-invariant polynomial. It is clear that $\sup_{x \in \overline{B_{\ell_{\infty}}}} |\varphi (x)| \leq 1$. However, $|\varphi (z)| = \lim_{n\rightarrow \infty} |z_{j_n}| > 1$; hence $z$ is separated from $\overline{B_{\ell_{\infty}}}$ by $\varphi$. 

Conversely, assume that $\alpha \leq 1$. Notice that if $\alpha < 1$, then there exist at most finitely many $z_n$ whose modulus greater than $1$. Say, $\{ n \in \N : |z_n | > 1 \} = \{n_1, \ldots, n_l\}$ for some $n_1, \ldots, n_l \in \N$. If $P$ is a $\mathcal{G}_f$-invariant continuous polynomial on $\ell_{\infty}$, then 
\[
P(z) = P(z - (0,\ldots,0, z_{n_1}, 0, \ldots, 0, z_{n_2}, 0, \ldots, 0, z_{n_l}, 0, \ldots )) 
\]
since $w:= (0,\ldots,0, z_{n_1}, 0, \ldots, 0, z_{n_2}, 0, \ldots, 0, z_{n_l}, 0, \ldots )$ belongs to $c_0$, see  \cite[Theorem 4.3]{GVZ}. As every component of $z-w$ has modulus less than equal to $1$, we have that 
\[
|P(z)| = |P(z-w)| \leq \sup_{x \in \overline{B_{\ell_{\infty}}}} |P(x)|.
\]
Now, assume that $\alpha = 1$. Then, for each $r \in (0,1)$, we have that 
$\limsup_{j \rightarrow \infty} |r z_j| = r \alpha = r < 1$. From the preceding observation, we have that 
\[
|P(rz)| \leq \sup_{x \in \overline{B_{\ell_{\infty}}}} |P(x)|.
\]
By letting $r \rightarrow 1$, we get that $|P(z)| \leq \sup_{x \in \overline{B_{\ell_{\infty}}}} |P(x)|$; hence we conclude that $z$ cannot be separated from $\overline{B_{\ell_{\infty}}}$ by a $\mathcal{G}_f$-invariant continuous polynomial. 
\end{proof}

	\subsection{The group of supersymmetries.} 
	Let us fix a finite or infinite subset $A$ of the natural numbers. We denote by $A_{0}=A\cup -A\subset \mathbb Z\setminus \{0\}$ and by $\ell_p(A_{0})$, $1\leq p\leq \infty$, the Banach space of all absolutely $p$-summing complex sequences with index in $A_{0}$. Note that if the cardinal of $A$ is finite, lets say $n$, we have that $A_{0}$ can be identified with the set $\{-n,-n+1,\ldots,-2,-1,1,2,\ldots,n-1,n\}$ and if the cardinal of $A$ is infinite, $A_{0}$ can be identified with the set $\mathbb Z\setminus \{0\}$.
	Then, any element $z$ in $\ell_p(A_{0})$ can be written as 
	\[
		z=(z_{-n},\ldots,z_{-2},z_{-1},z_1,z_2,\ldots,z_n),
	\]
	if $n$ is finite, and 
	\[
		z=(\ldots,z_{-n},\ldots,z_{-2},z_{-1},z_1,z_2,\ldots,z_n,\ldots),
	\]
	if $n$ is infinite.
	Consider the following set of permutations of $A_{0}$ given by 
	\[
		\Lambda :=\{\sigma: A_{0}\mapsto A_{0}\text{ such that } \sigma \text{ is a permutation, and } \sigma(A)=A \text{ and }\sigma(-A)=-A\}.
	\]

Note that $\Lambda$ is a subgroup of permutations of the countable set $A_{0}$ and it can be considered to be a group action on $\ell_p (A_{0})$ in a natural way. Therefore, if $A$ is finite the results obtained in Section \ref{permutations} follow automatically. However, here we are interested in a specific family of $\Lambda$-invariant polynomials. Consider the following polynomials defined on $\ell_p(A_{0})$, $1 \leq p < \infty$, 
	\[
		T_k(z)=\sum_{i\in A}z_i^k-\sum_{i\in -A}z_i^k
	\] for $k\in\mathbb N$. It is clear by definition that $T_k (z) = T_k (\widehat{\sigma} (z))$ for all $\sigma \in \Lambda$; hence $T_k$ is a continuous $\Lambda$-invariant polynomial provided $k\geq p$. 
	
A polynomial $P$ on $\ell_p(A_{0})$ is said to be \emph{supersymmetric} if it can be represented as an algebraic combination of polynomials $(T_k)_{k=1}^\infty$. In other words, $P$ is a finite sum of finite products of polynomials in $(T_k)_{k=1}^\infty$ and constants.

	Supersymmetric polynomials were studied in \cite{Ser, Ste} for the case where $A$ is finite. In \cite{JZ} the authors consider the case where $A$ is infinite, and they study algebraic basis of the space of supersymmetric polynomials and the spectrum of algebras of supersymmetric analytic functions. Indeed, supersymmetric polynomials have applications in several areas of mathematics and physics such as the study of infinite generated Brauer groups and in the modelling of the behaviour of ideal gas in Statistical Mechanics, see \cite{JZ, Ser, Ste} and the references therein for the details. 
	Almost the same techniques to the ones used in the previous Proposition \ref{prop:permutations:ell_p} allow us to obtain the following result.
	
	\begin{prop}
	Let $A \subset \N$ be a set and set $X = \ell_p (A_{0})$ for $1 \leq p < \infty$. Suppose that $z$ is an element in $X$ such that either exits $j_0\in A$ such that $\vert x_{j_0}\vert >1$ and $\vert x_{j}\vert \leq 1$ for all $j\in -A$ or exits $j_0\in -A$ such that $\vert x_{j_0}\vert >1$ and $\vert x_{j}\vert \leq 1$ for all $j\in A$.
	Then $z$ can be separated from $\overline{B_X}$ by a supersymmetric polynomial $T$. 
	\end{prop}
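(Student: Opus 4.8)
The plan is to mimic the proof of Proposition~\ref{prop:permutations:ell_p}, using the supersymmetric polynomials $T_k$ in place of the elementary $\mathcal{G}$-invariant polynomials $F_k$, with extra care for the minus sign appearing in $T_k$. Write $z=(z_i)_{i\in A_0}$ and treat the first alternative in the hypothesis, namely that there is $j_0\in A$ with $|z_{j_0}|>1$ while $|z_j|\le 1$ for every $j\in -A$; the second alternative is completely symmetric upon exchanging the roles of $A$ and $-A$ (equivalently, replacing $T_k$ by $-T_k$). Since $z\in\ell_p(A_0)\subset\ell_k(A_0)$ for every integer $k\ge p$, each such $T_k$ is a well-defined continuous ($\Lambda$-invariant, in particular supersymmetric) polynomial; and if $\|x\|_p\le 1$ then $|x_i|\le 1$ for all $i$, so $|T_k(x)|\le\sum_{i\in A_0}|x_i|^k\le\sum_{i\in A_0}|x_i|^p\le 1$. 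Hence $\sup_{x\in\overline{B_X}}|T_k(x)|\le 1$ whenever $k\ge p$, and it suffices to produce an integer $k\ge p$ with $|T_k(z)|>1$.

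To find such a $k$, put $\beta:=\max_{i\in A}|z_i|$; this maximum is attained because $z\in\ell_p(A_0)$ forces $|z_i|\to 0$, and $\beta\ge|z_{j_0}|>1$ by hypothesis. Let $F:=\{i\in A:|z_i|=\beta\}$, a finite set. By Lemma~\ref{lem_AFM}, applied to the finitely many unimodular numbers $\{z_i/|z_i|:i\in F\}$, there is a strictly increasing sequence of natural numbers $(n_k)$, which we may also assume satisfies $n_k\ge p$, such that $(z_i/|z_i|)^{n_k}\to 1$ for every $i\in F$. Then, for $k$ large,
\[
\Bigl|\sum_{i\in F}z_i^{n_k}\Bigr|=\beta^{n_k}\Bigl|\sum_{i\in F}(z_i/|z_i|)^{n_k}\Bigr|\ge\beta^{n_k}\,\re\Bigl(\sum_{i\in F}(z_i/|z_i|)^{n_k}\Bigr)\ge\frac{|F|}{2}\,\beta^{n_k}.
\]
On the other hand, writing $\beta':=\max\{|z_i|:i\in A_0,\ 1<|z_i|<\beta\}<\beta$ (taking $\beta':=0$ if this set is empty), $C:=\sum_{i\in A_0,\,|z_i|\le 1}|z_i|^p<\infty$ and $m:=\#\{i\in A_0:1<|z_i|<\beta\}<\infty$, one bounds, using $n_k\ge p$ together with $|z_j|\le 1$ on $-A$,
\[
\sum_{i\in A\setminus F}|z_i|^{n_k}+\sum_{i\in -A}|z_i|^{n_k}\le C+m\,(\beta')^{n_k}.
\]
Since $\beta>\max\{1,\beta'\}$, combining these two estimates with $|T_{n_k}(z)|\ge\bigl|\sum_{i\in F}z_i^{n_k}\bigr|-\sum_{i\in A\setminus F}|z_i|^{n_k}-\sum_{i\in -A}|z_i|^{n_k}$ yields $|T_{n_k}(z)|\ge\frac{|F|}{2}\beta^{n_k}-C-m(\beta')^{n_k}\to\infty$. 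Choosing $k$ with $|T_{n_k}(z)|>1$ then gives $|T_{n_k}(z)|>1\ge\sup_{x\in\overline{B_X}}|T_{n_k}(x)|$, so the supersymmetric polynomial $T_{n_k}$ separates $z$ from $\overline{B_X}$.

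The places where I expect to have to be careful, rather than where any real difficulty lies, are: (i) the fact that $z\in\ell_p(A_0)$ makes only finitely many coordinates have modulus above any fixed positive threshold, which is what guarantees that $\beta$ is attained and that $\beta'<\beta$; (ii) the uniform-in-$k$ estimate $\sum_{|z_i|\le 1}|z_i|^{n_k}\le\sum_{|z_i|\le 1}|z_i|^p$, valid since $n_k\ge p$; and (iii) the role of the hypothesis that the ``opposite half'' of the coordinates is bounded by $1$: this is precisely what keeps $\sum_{i\in -A}|z_i|^{n_k}$ bounded and prevents it from cancelling the dominant growth $\frac{|F|}{2}\beta^{n_k}$ coming from $\sum_{i\in F}z_i^{n_k}$. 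The remainder is a routine repetition of the estimates in Proposition~\ref{prop:permutations:ell_p}.
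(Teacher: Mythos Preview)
Your argument is correct and follows precisely the route the paper intends: the paper gives no separate proof, merely noting that ``almost the same techniques to the ones used in the previous Proposition~\ref{prop:permutations:ell_p}'' yield the result, and your adaptation of that proposition's proof --- applying Lemma~\ref{lem_AFM} to finitely many arguments, then showing $|T_{n_k}(z)|\to\infty$ while $\sup_{\overline{B_X}}|T_{n_k}|\le 1$ --- is exactly that. Your organization via the maximum-modulus set $F$ (rather than a tail cutoff $N$ as in Proposition~\ref{prop:permutations:ell_p}) is a harmless variant. One small wording slip: the parenthetical ``$\Lambda$-invariant, in particular supersymmetric'' reverses the implication (supersymmetric $\Rightarrow$ $\Lambda$-invariant, not conversely), but this is immaterial since each $T_k$ is supersymmetric by definition.
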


	\begin{remark}
	Note that in general there are points outside the unit ball of $\ell_p(A_{0})$ that cannot be separated from $\overline{B_{\ell_p(A_{0})}}$ by a supersymmetric polynomial. Consider, for instance, the point $z$ in $\ell_p (A_{0})$ such that $z_j = z_{-j}$ for every $j \in A$. Then $z$ does not satisfy the assumptions the previous proposition. Moreover, $T_k (z) = 0$ for every $k \in \N$, which implies that $P(z) = 0$ for every supersymmetric polynomial $P$ on $\ell_p (A_{0})$. 
	\end{remark}


\subsection{The group of composition operators on $C(K)$.}
	Let us consider $C(K)$, the space of continuous functions on a compact Hausdorff space $K$, and the group $H \subset \Lin(C(K))$ of composition operators on $C(K)$ defined as 
	\[
	H= \{ \gamma : C(K) \rightarrow C(K) \text{ such that } \gamma(f)=f\circ \phi \,\, \text{for some homeomorphism} \,\, \phi : K \rightarrow K \}.
	\]
	This group was studied in \cite[Section 3]{AGPZ} and it is proved that, in the case of $K=[0,1]$, there exists a continuous non-multiplicative projection from $\mathcal{H}_b (C[0,1])$ onto the algebra of $H$-symmetric (bounded type) analytic functions on $C[0,1]$, endowed with the topology of uniform convergence on bounded subsets of $C[0,1]$. We present the following result which characterizes elements in $C[0,1] \setminus \overline{B_{C[0,1]}}$ that are separated from $\overline{B_{C[0,1]}}$ by a continuous $H$-invariant polynomial.

\begin{theorem}\label{separation:C[0,1]}
Let $g$ be an element outside $\overline{B_{C[0,1]}}$. Then $g$ can be separated from $\overline{B_{C[0,1]}}$ by an $H$-invariant continuous polynomial $P$ if and only if $\max \{|g(0)|, |g(1)| \} > 1$. 
\end{theorem}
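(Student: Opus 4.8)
The plan is to prove the two directions separately, with the structure of the argument governed by the special role of the endpoints $0$ and $1$ of the interval $[0,1]$: they are precisely the points fixed (as a set) by every homeomorphism of $[0,1]$, since a homeomorphism of $[0,1]$ either fixes both endpoints or swaps them. This means that for $\gamma(f) = f \circ \phi$ we always have $\{|f(\phi(0))|, |f(\phi(1))|\} = \{|f(0)|, |f(1)|\}$ as a set.

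For the ``if'' direction, suppose $\max\{|g(0)|, |g(1)|\} > 1$; without loss of generality say $|g(0)| > 1$. The idea is to use the evaluation functional at $0$, but it is not $H$-invariant, so I would symmetrize it. Consider the degree-two polynomial $R(f) = f(0)^2 + f(1)^2$. For a homeomorphism $\phi$ with $\phi(0), \phi(1) \in \{0,1\}$ we get $R(f \circ \phi) = f(\phi(0))^2 + f(\phi(1))^2 = f(0)^2 + f(1)^2 = R(f)$, so $R$ is $H$-invariant. Now $|R(f)| \leq 2$ on $\overline{B_{C[0,1]}}$, but that bound is too weak: I need strict separation, and $|R(g)|$ could in principle be small due to cancellation between $g(0)^2$ and $g(1)^2$ if these are complex. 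The fix is to first apply Lemma \ref{lem_AFM} (or the complex-separation machinery) — pass to a high even power, or more directly consider $R_m(f) = f(0)^{2m} + f(1)^{2m}$, which is still $H$-invariant, satisfies $|R_m(f)| \leq 2$ on the ball, and has $|R_m(g)| \geq |g(0)|^{2m} - |g(1)|^{2m}$ when $|g(0)| > |g(1)|$, or one argues via arguments of $g(0)^2$ and $g(1)^2$ as in Theorem \ref{complex_separation}. In the real-scalar case $R(f) = f(0)^2 + f(1)^2$ already works since $|R(g)| \geq |g(0)|^2 > 1 \geq |R(f)|/2$ after rescaling; in the complex case one takes $m$ large so that $(g(0)^2)^m$ and $(g(1)^2)^m$ are nearly aligned, making $|R_m(g)| \geq \tfrac{1}{2}|g(0)|^{2m} \cdot (\text{something} > 2)$ eventually. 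Either way we get an $H$-invariant continuous polynomial separating $g$ from $\overline{B_{C[0,1]}}$.

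For the ``only if'' direction, suppose $\max\{|g(0)|, |g(1)|\} \leq 1$; I must show no $H$-invariant continuous polynomial $P$ separates $g$ from $\overline{B_{C[0,1]}}$, i.e. $|P(g)| \leq \sup_{\|f\| \leq 1} |P(f)|$. The key geometric fact is that for any $g$ with $|g(0)| \leq 1$ and $|g(1)| \leq 1$, one can find, for every $\varepsilon > 0$, a homeomorphism $\phi$ of $[0,1]$ such that $g \circ \phi$ is uniformly within $\varepsilon$ of a function in $\overline{B_{C[0,1]}}$ — intuitively, one reparametrizes so that the ``mass'' of $g$ where $|g| > 1$ is squeezed toward the endpoints, where $g$ is already bounded by $1$, and then truncates. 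Concretely: given $g$, the set $\{t : |g(t)| > 1\}$ is open and (since $|g(0)|, |g(1)| \leq 1$, and using continuity) for any $\delta$ it is contained in a compact subset of $(0,1)$ up to a set where $|g|$ is close to $1$; choosing a homeomorphism $\phi$ that compresses this compact set into a tiny interval near a point and stretches a neighborhood of the endpoints, then replacing $g \circ \phi$ by its truncation to modulus $\leq 1$ changes it by a small uniform amount. Since $\gamma(g) = g \circ \phi$ has $P(g) = P(g \circ \phi)$ by $H$-invariance, and $P$ is continuous, we get $|P(g)| = |P(g\circ\phi)| \leq |P(h)| + (\text{small}) \leq \sup_{\overline{B_{C[0,1]}}} |P| + (\text{small})$ for $h \in \overline{B_{C[0,1]}}$ close to $g \circ \phi$; letting the error go to $0$ finishes it.

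The main obstacle is the ``only if'' direction, specifically constructing the homeomorphism $\phi$ that simultaneously pushes all the ``large-modulus'' part of $g$ toward the endpoints while keeping everything continuous. The subtlety is that $g$ may have large modulus on a complicated subset of $(0,1)$, possibly accumulating at many points, so one cannot literally move it ``to the endpoint''; the right move is to note that one only needs, for each $\varepsilon$, a homeomorphism making $g \circ \phi$ $\varepsilon$-close to the ball, not equal to it, and to exploit that $|g| \leq 1 + \varepsilon$ on a neighborhood of $\{0,1\}$ by continuity, so it suffices to compress the (compact) set $\{|g| \geq 1 + \varepsilon\} \subset (0,1)$ into an arbitrarily short subinterval — on a short interval the function $g \circ \phi$ can be replaced by a bounded one with small uniform cost because the excursion has small ``width,'' but in sup norm width does not help directly, so in fact the cleanest route is: compress $\{|g| \ge 1+\varepsilon\}$ toward, say, the point $0$ via a homeomorphism fixing $0$ and $1$, obtaining $g\circ\phi$ which is $\le 1+\varepsilon$ except on a short interval $[0,\eta]$ abutting the endpoint where $g\circ\phi(0)=g(0)$ has modulus $\le 1$; then modify on $[0,\eta]$ by linear interpolation between the value at $0$ and the value at $\eta$, which has modulus $\le 1+\varepsilon$ — giving a function of norm $\le 1+\varepsilon$ at uniform distance $O(\varepsilon)$ from $g \circ \phi$. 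Rescaling by $1/(1+\varepsilon)$ and letting $\varepsilon \to 0$, combined with continuity of $P$, yields $|P(g)| \le \sup_{\overline{B_{C[0,1]}}}|P|$, completing the proof.
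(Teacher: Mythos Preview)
Your ``if'' direction is essentially the paper's argument: the paper uses $P_m(f)=\tfrac{1}{2}(f(0)^m+f(1)^m)$, checks $H$-invariance from $\{\phi(0),\phi(1)\}=\{0,1\}$, bounds $|P_m|\le 1$ on the ball, and applies Lemma~\ref{lem_AFM} to align the arguments of $g(0)^m$ and $g(1)^m$ so that $|P_{m_l}(g)|\ge \tfrac14 r^{m_l}\to\infty$. Your $R_m$ does the same job.

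Your ``only if'' direction, however, has a genuine gap. You claim that after compressing the set $\{|g|\ge 1+\varepsilon\}$ into a short interval $[0,\eta]$ via a homeomorphism $\phi$, and then replacing $g\circ\phi$ on $[0,\eta]$ by the linear interpolation $h$ between $(g\circ\phi)(0)$ and $(g\circ\phi)(\eta)$, the resulting function is at uniform distance $O(\varepsilon)$ from $g\circ\phi$. This is false. Composition with a homeomorphism preserves the range, so $\|g\circ\phi\|_\infty=\|g\|_\infty$ for every $\phi$; in particular there is a point $t_0\in[0,\eta]$ with $|(g\circ\phi)(t_0)|=\|g\|$, while $|h(t_0)|\le 1+\varepsilon$, giving
\[
\|g\circ\phi - h\|_\infty \ \ge\ \|g\| - (1+\varepsilon),
\]
which does not tend to $0$. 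More fundamentally, $\operatorname{dist}\bigl(g\circ\phi,\overline{B_{C[0,1]}}\bigr)=\|g\|-1$ for every homeomorphism $\phi$, so no element of the $H$-orbit of $g$ can be approximated in sup-norm by ball elements, and continuity of $P$ cannot be invoked this way.

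The paper avoids this entirely by quoting a structural result \cite[Theorem~3.1]{AGPZ}: every $H$-invariant holomorphic function $F$ on $C[0,1]$ satisfies
\[
F(f)=F\bigl(t\mapsto f(0)(1-t)+f(1)t\bigr)\qquad(f\in C[0,1]).
\]
Thus $F(g)=F(\tilde g)$ with $\tilde g(t)=g(0)(1-t)+g(1)t$, and since $|g(0)|,|g(1)|\le 1$ one has $\|\tilde g\|\le 1$, whence $|F(g)|=|F(\tilde g)|\le\sup_{\overline{B_{C[0,1]}}}|F|$. The point is that the needed ``reduction to the affine interpolant'' holds \emph{exactly at the level of the value $F(g)$}, not approximately at the level of the orbit of $g$ in sup-norm; proving that reduction requires the analysis carried out in \cite{AGPZ}, not a direct approximation argument.
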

	
	\begin{proof}
	Suppose that $\max \{|g(0)|, |g(1)| \} \leq 1$. As $|g(t_0)| > 1$ for some $t_0 \in (0,1)$, notice that the element $g$ is separated from $\overline{B_{C[0,1]}}$ by the evaluation functional $\delta_{t_0} \in C[0,1]^*$. However, no $H$-invariant holomorphic function on $C[0,1]$ separates $g$ and $\overline{B_{C[0,1]}}$. Indeed, \cite[Theorem 3.1]{AGPZ} shows that an $H$-invariant holomorphic function $F$ on $C[0,1]$ satisfies that 
	\[
	F(f) = F(t \mapsto f(0)(1-t) + f(1)t) \quad (f \in C[0,1]).
	\]
	Let us consider $\tilde{g} \in C[0,1]$ defined as $\tilde{g}(t) = g(0)(1-t) + g(1)t$ for every $t \in [0,1]$. Then $\|\tilde{g}\| \leq 1$, which implies that $\tilde{g} \in \overline{B_{C[0,1]}}$. As $F(g) = F(\tilde{g})$; hence $|F(g)| = |F(\tilde{g})| \leq \sup_{f \in \overline{B_{C[0,1]}}} |F(f)|$. It follows that an $H$-invariant holomorphic function $F$ on $C[0,1]$ cannot separate the element $g$ and the ball $\overline{B_{C[0,1]}}$. 
	
	Conversely, assume that $\max \{|g(0)|, |g(1)| \} > 1$, say $|g(0)|> r > 1$. For each $m \in \N$, consider the continuous polynomial $P_m : C[0,1] \rightarrow \C$ defined as 
	\[
	P_m(f) := \frac{1}{2} (f(0)^m + f(1)^m) \quad (f \in C[0,1]).
	\] 
	As 
	\[
	P_m (\gamma(f)) = P_m (t \mapsto (f\circ \phi) (t)) = \frac{1}{2} [f(\phi(0))^m + f(\phi(1))^m] = \frac{1}{2} (f(0)^m + f(1)^m) = P_m(f)
	\]
	for every $\gamma \in H$ (with associated homeomorphism $\phi$) and $f \in C[0,1]$, the continuous polynomial $P_m$ is an $H$-invariant continuous polynomial. Note that
	\[
	|P_m (f)| = \left| \frac{1}{2} (f(0)^m + f(1)^m) \right| \leq 1 
	\]
	for every $f \in \overline{B_{C[0,1]}}$ and $m \in \N$. However, if we denote $g(0)=|g(0)| e^{i \theta_0}$ and $g(1) = |g(1)| e^{i \theta_1}$ for some real numbers $\theta_0$ and $\theta_1$, then 
	\begin{equation}\label{estimate:Pm:composition_operators}
	|P_m (g)| = \left| \frac{1}{2} (g(0)^m + g(1)^m) \right| = 
	\frac{1}{2} \left| |g(0)|^m e^{i\theta_0 m} + |g(1)|^m e^{i\theta_1 m} \right|. 
	\end{equation}
	By Lemma \ref{lem_AFM}, there exists a sequence $(m_l)_{l=1}^{\infty} \subset \N$ such that $e^{i \theta_0 m_l} \xrightarrow[l \rightarrow \infty]{} 1$ and $e^{i \theta_1 m_l} \xrightarrow[l \rightarrow \infty]{} 1$. Pick $l_0 \in \N$ so that $|1 - e^{i\theta_0 m_l}| < \frac{1}{2}$ and $|1 - e^{i\theta_1 m_l}| < \frac{1}{2}$ for every $l \geq l_0$. 
	From \eqref{estimate:Pm:composition_operators}, we have for $l \geq l_0$ 
	\begin{align*}
	|P_{m_l}(g)| &\geq \frac{1}{2} \left(|g(0)|^{m_l} + |g(1)|^{m_l} - |g(0)|^{m_l} |1-e^{i\theta_0 m_l}| - |g(1)|^{m_l} |e^{i\theta_1 m_l} -1|	\right) \\
	&\geq \frac{1}{4} (|g(0)|^{m_l} + |g(1)|^{m_l}) \\
	&\geq \frac{1}{4} r^{m_l}. 
	\end{align*}
	It follows that $P_{m_l}$ with sufficiently large $l \in \N$ separates the element $g$ and the set $\overline{B_{C[0,1]}}$. 
	\end{proof}

In \cite{AGPZ} is also considered the space of $H$-invariant holomorphic functions on $C(K)$, where $K = [-1,1] \cup [0, i] \subset \C$ is the $T$-shape space, or $\{ e^{i\theta} \in \C : \theta \in [0,2\pi]\}$, i.e., the unit circle in $\C$, or $[0,1]^2 \subset \C$, i.e., the square in $\C$. It is proved that any $H$-invariant holomorphic function is constant when $K$ is the unit circle or square in $\C$. When $K$ is the $T$-shape space, an analytic function $F : C(K) \rightarrow \C$ is $H$-invariant if and only if there is a polynomial $\mathcal{F} \in \mathcal{H} (\C^4)$ symmetric with respect to the last three variables so that $F( f ) = \mathcal{F} (f(0), f(1), f(-1), f(i) )$ for every $f \in C(K)$. Following the idea of the proof of \cite[Example 3.6]{AGPZ}, we can observe that if $F : C(K) \rightarrow \C$ is an $H$-invariant polynomial, then 
\begin{equation}
\label{characterisation-of-T}
F (f) = F \left( f(0) ( 1 - \kappa_1 - \kappa_2 - \kappa_3) + f(1) \kappa_1 + f(-1) \kappa_2 + f(i) \kappa_3 \right) \quad (f \in C(K)),
\end{equation}
where $\kappa_1, \kappa_2$ and $\kappa_3$ are elements in $C(K)$ defined as 
\begin{align*}
\kappa_1 ( a+ ib) = a \chi_{[0,1]}(a+ib), \,\,\, \kappa_2 ( a+ ib) = -a \chi_{[-1,0]}(a+ib), \,\,\, \kappa_3 ( a+ ib) = b \quad ( a + ib \in K).
\end{align*}
The following result is an analogue of Theorem \ref{separation:C[0,1]} for the space $C(K)$ where $K$ the $T$-shape set. 

\begin{theorem}
	Let $K$ be the $T$-shape set and $g$ be an element outside $\overline{B_{C(K)}}$. Then $g$ can be separated from $\overline{B_{C(K)}}$ by an $H$-invariant continuous polynomial $P$ if and only if \[\max \{|g(0)|, |g(1)|, |g(-1)|, |g(i)| \} > 1.\] 
\end{theorem}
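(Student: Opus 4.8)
The plan is to mimic exactly the structure of the proof of Theorem~\ref{separation:C[0,1]}, replacing the two-point evaluation $\{0,1\}$ by the four-point evaluation $\{0,1,-1,i\}$ and using the characterization \eqref{characterisation-of-T} in place of \cite[Theorem~3.1]{AGPZ}. So the proof splits into the two implications of the ``if and only if''.

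\textbf{The ``only if'' direction (necessity).} Suppose $\max\{|g(0)|,|g(1)|,|g(-1)|,|g(i)|\}\leq 1$. Since $g\notin\overline{B_{C(K)}}$, there is $t_0\in K\setminus\{0,1,-1,i\}$ with $|g(t_0)|>1$, so $g$ is separated from $\overline{B_{C(K)}}$ by the evaluation functional $\delta_{t_0}$; but I must show no $H$-invariant continuous polynomial does the job. Given such a polynomial $F$, apply \eqref{characterisation-of-T} to get $F(g)=F(\widetilde g)$ where $\widetilde g := g(0)(1-\kappa_1-\kappa_2-\kappa_3)+g(1)\kappa_1+g(-1)\kappa_2+g(i)\kappa_3$. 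The key estimate is $\|\widetilde g\|_{C(K)}\leq 1$: at each point $a+ib\in K$ the value $\widetilde g(a+ib)$ is a convex combination (with nonnegative coefficients $1-\kappa_1-\kappa_2-\kappa_3,\ \kappa_1,\ \kappa_2,\ \kappa_3$ summing to $1$, which one checks from the explicit geometry of the $T$-shape, where on each of the three prongs exactly one of $\kappa_1,\kappa_2,\kappa_3$ is nonzero and equals the relevant coordinate in $[0,1]$) of the four scalars $g(0),g(1),g(-1),g(i)$, each of modulus $\leq 1$; hence $|\widetilde g(a+ib)|\leq 1$. Therefore $\widetilde g\in\overline{B_{C(K)}}$ and $|F(g)|=|F(\widetilde g)|\leq\sup_{f\in\overline{B_{C(K)}}}|F(f)|$, so $F$ cannot separate $g$ from the ball.

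\textbf{The ``if'' direction (sufficiency).} Assume $\max\{|g(0)|,|g(1)|,|g(-1)|,|g(i)|\}>1$; pick $r>1$ below this maximum and let $p\in\{0,1,-1,i\}$ be a point attaining a value $>r$. For $m\in\N$ define the continuous polynomial $P_m(f):=\tfrac14\bigl(f(0)^m+f(1)^m+f(-1)^m+f(i)^m\bigr)$. Since any homeomorphism $\phi:K\to K$ must permute the three endpoints $\{1,-1,i\}$ of the prongs and fix the branch point $0$, we get $P_m(\gamma f)=P_m(f)$ for all $\gamma\in H$, so each $P_m$ is $H$-invariant, and clearly $|P_m(f)|\leq 1$ on $\overline{B_{C(K)}}$. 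Writing $g(0)=|g(0)|e^{i\theta_0}$, etc., and invoking Lemma~\ref{lem_AFM} with the four unimodular numbers $e^{i\theta_0},e^{i\theta_1},e^{i\theta_{-1}},e^{i\theta_i}$, choose $m$ in the resulting subsequence large enough that each $|1-e^{i\theta_\bullet m}|<\tfrac12$; then the triangle-inequality estimate gives $|P_m(g)|\geq\tfrac18\bigl(|g(0)|^m+|g(1)|^m+|g(-1)|^m+|g(i)|^m\bigr)\geq\tfrac18 r^m\to\infty$, so $P_m$ separates $g$ from $\overline{B_{C(K)}}$ for $m$ large.

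\textbf{Main obstacle.} The only non-routine point is the verification in the necessity direction that the coefficient functions $1-\kappa_1-\kappa_2-\kappa_3,\kappa_1,\kappa_2,\kappa_3$ really form a partition of unity into nonnegative functions on the $T$-shape $K$, i.e. that $\|\widetilde g\|\leq\max\{|g(0)|,|g(1)|,|g(-1)|,|g(i)|\}$; this requires unwinding the explicit definitions of $\kappa_1,\kappa_2,\kappa_3$ on each of the three prongs $[0,1]$, $[-1,0]$, $[0,i]$ of $K$ and checking that $0\leq\kappa_1+\kappa_2+\kappa_3\leq 1$ pointwise. Everything else is a transcription of the proof of Theorem~\ref{separation:C[0,1]}.
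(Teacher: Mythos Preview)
Your proof is correct and follows essentially the same strategy as the paper. The necessity direction is identical (and you supply more detail than the paper on why $1-\kappa_1-\kappa_2-\kappa_3,\kappa_1,\kappa_2,\kappa_3$ form a nonnegative partition of unity on $K$). In the sufficiency direction you take a slightly different route: you use the single family $P_m(f)=\tfrac14\bigl(f(0)^m+f(1)^m+f(-1)^m+f(i)^m\bigr)$ and apply Lemma~\ref{lem_AFM} to all four arguments at once, whereas the paper splits into two cases, using $\tfrac14\bigl(f(0)^m+f(1)+f(-1)+f(i)\bigr)$ when $|g(0)|>1$ (which avoids Lemma~\ref{lem_AFM} entirely) and $\tfrac14\bigl(f(0)+f(1)^m+f(-1)^m+f(i)^m\bigr)$ when $\max\{|g(1)|,|g(-1)|,|g(i)|\}>1$. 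Your unified family is cleaner and avoids the case distinction; the paper's split exploits the asymmetry between the fixed point $0$ and the permuted endpoints $\{1,-1,i\}$ to get away with fewer angles in each case, but this buys nothing essential.
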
 

\begin{proof}
	Suppose that $\max \{|g(0)|, |g(1)|, |g(-1)|, |g(i)| \} \leq 1$. Let $F$ be an $H$-invariant holomorphic function on $C(K)$. We know by \eqref{characterisation-of-T} that $F (f) = F(\tilde f)$ where 
\[
	\tilde{f}(w) = f(0) ( 1 - \kappa_1 (w) - \kappa_2 (w) - \kappa_3 (w)) + f(1) \kappa_1 (w) + f(-1) \kappa_2 (w) + f(i) \kappa_3 (w) 
	\]
	for every $w \in K$ and every $f\in C(K)$. Hence, if $f\in C(K)$ and $\max \{|g(0)|, |g(1)|, |g(-1)|, |g(i)| \} \leq 1$ we have that $\Vert \tilde g\Vert\leq 1$. But, $\vert F(g)\vert =\vert F(\tilde g)\vert \leq \sup_{f\in\overline{B_{C(K)}}}\vert F(f)\vert$. Therefore, no $H$-invariant holomorphic function on $C(K)$ separates the element $g$ and the ball $\overline{B_{C(K)}}$.
	
	Conversely, assume that $\max \{ |g(0)|, |g(1)|, |g(-1)|, |g(i)| \} > 1$. 
	\begin{enumerate}
		\item[(i)] If $|g(0)| > r > 1$, let us consider the continuous polynomial $P_m : C(K) \rightarrow \C$ defined as 
		\[
		P_m (f) = \frac{1}{4} \left( f(0)^m + f(1) + f(-1) + f(i) \right) \quad (f \in C(K)). 
		\] 
		By the shape of the set $K$, we see that $\phi (0) = 0$ and $\{ \phi(1), \phi(-1), \phi(i) \} = \{1, -1, i\}$ for every homeomorphism $\phi : K \rightarrow K$. Hence, $P_m$ is an $H$-invariant continuous polynomial. Note that 
		\[
		|P_m (f)| = \left| \frac{1}{4} \left( f(0)^m + f(1) + f(-1) + f(i) \right)	\right| \leq 1
		\]
		for every $f \in \overline{B_{C(K)}}$ and $m \in \N$. However,
		\begin{align*}
		|P_m (g)| = \left| \frac{1}{4} \left( g(0)^m + g(1) + g(-1) + g(i) \right)	\right| &\geq \frac{1}{4} ( |g(0)|^m - | g(1) + g(-1) + g(i)| ) \\
		&\geq \frac{1}{4} ( r^m - | g(1) + g(-1) + g(i)| ) \xrightarrow[m \rightarrow \infty]{} \infty;
		\end{align*} 
		hence, $P_m$ with sufficiently large $m$ can separate the element $g$ and the ball $\overline{B_{C(K)}}$. 
		\item[(ii)] If $\max \{|g(1)|, |g(-1)|, |g(i)| \} > r > 1$, then consider the continuous polynomial $R_m$ from $C(K)$ to $ \C$ defined as 
		\[
		R_m (f) = \frac{1}{4} \left( f(0) + f(1)^m + f(-1)^m + f(i)^m \right) \quad (f \in C(K)). 
		\] 
		Note that $R_m$ is an $H$-invariant continuous polynomial and $
		|R_m (f)| \leq 1$ for every $f \in \overline{B_{C(K)}}$ and $m \in \N$. Let $\theta_1, \theta_{-1}$ and $\theta_{0}$ be real numbers such that $g(1) = |g(1)| e^{i\theta_1}, g(-1)=|g(-1)| e^{i\theta_{-1}}$ and $g(i) = |g(i)| e^{i\theta_0}$. Using Lemma \ref{lem_AFM}, we can find a sequence $(m_l)_{l=1}^{\infty} \subset \N$ so that 
		$e^{i \theta_1 m_l} \xrightarrow[l \rightarrow \infty]{} 1$, $e^{i \theta_{-1} m_l} \xrightarrow[l \rightarrow \infty]{} 1$ and $e^{i \theta_0 m_l} \xrightarrow[l \rightarrow \infty]{} 1$. Choose $l_0 \in \N$ such that $|1 - e^{i\theta_1 m_l}| < \frac{1}{2}$, $|1 - e^{i\theta_{-1} m_l}| < \frac{1}{2}$ and $|1 - e^{i\theta_0 m_l}| < \frac{1}{2}$, then 
		\begin{align*}
		|R_{m_l} (g)| &= \left|\frac{1}{4} \left( g(0) + g(1)^{m_l} + g(-1)^{m_l} + g(i)^{m_l} \right) \right| \\
		&= \frac{1}{4} \left| g(0) + |g(1)|^{m_l} e^{i\theta_1 m_{l}} + |g(-1)|^{m_l} e^{i\theta_{-1} m_{l}} + |g(i)|^{m_l} e^{i\theta_0 m_{l}} \right| \\
		&\geq \frac{1}{4} \Big(|g(1)|^{m_l} + |g(-1)|^{m_l} + |g(i)|^{m_l} \\
		&\quad\qquad - |g(1)|^{m_l} |1-e^{i\theta_1 m_l}| - |g(-1)|^{m_l} |1-e^{i\theta_{-1} m_l}| - |g(i)|^{m_l} |1-e^{i\theta_0 m_l}| - |g(0)| \Big) \\ 
		&\geq \frac{1}{8} (|g(1)|^{m_l} + |g(-1)|^{m_l} + |g(i)|^{m_l}) - \frac{1}{4} |g(0)| \\
		&\geq \frac{1}{8} r^{m_l} - \frac{1}{4} |g(0)|
		\end{align*}
		for every $l \geq l_0$. This implies that the $H$-invariant continuous polynomial $R_{m_l}$ with sufficiently large $l \in \N$ separates the element $g$ and the set $\overline{B_{C(K)}}$. 
	\end{enumerate}
\end{proof}

	\subsection{The group of composition of measure-preserving maps on $L^p[0,1]$.}
		Let us consider the space $X = L^p [0,1]$ for $1 \leq p < \infty$. We study the group $M$ of $\Lin (L^p[0,1])$ defined as 
\[
M = \{ T : L^p [0,1] \rightarrow L^p [0,1] \text{ such that } Tx = x \circ \phi, \,\, \phi: [0,1] \rightarrow [0,1] \text{ a measure-preserving mapping}\},
\]
with the topology induced by $\Lin(L^{p}[0,1])$ was studied in \cite[Section 5]{AGPZ}.

Note that $x \in L^p [0,1] \rightarrow \int_0^1 x^k \in \C$ is a $k$-homogeneous $M$-invariant continuous polynomial for each $1 \leq k \leq p$. 
Given $N \in \N$, let $I_{j}^{(N)} \subset [0,1]$ be the interval $((j-1)2^{-N}, j 2^{-N})$ for $j = 1, \ldots, 2^N$. Note that the measure of this interval $I_j^{(N)}$ is $2^{-N}$ and these intervals form a partition of $[0,1]$. As in \cite{AGPZ}, let us denote by $S_N$ the space of $N$-level step functions defined as 
\[
S_N = \left\{ x : [0,1] \rightarrow \C : x(t) = \sum_{j=1}^{2^N} a_j \chi_j^{(N)} (t) \,\, \text{for some finite sequence} \,\, \{a_j\}_{j=1}^{2^N} \subset \C \right\}, 
\]
where $\chi_j^{(N)} (t) = \chi_{I_{j}^{(N)}} (t)$. Note that $\cup_{N=1}^{\infty} S_N$ is dense in $L^p [0,1]$. 
Let $\iota_N : \C^{2^N} \rightarrow S_N$ be the identification map defined by 
\[
z \mapsto x(t) = \sum_{j=1}^{2^N} z_j \chi_j^{(N)} (t), \quad z = (z_j)_{j=1}^{2^N} \in \C^{2^N}.
\]
It is observed in \cite[Corollary 5.4]{AGPZ} that if $P_k$ is a $k$-homogeneous $M$-invariant continuous polynomial with $k \leq p$, then there exists a continuous polynomial $Q : \C^k \rightarrow \C$ such that 
\[
P_k (x) = Q\left( \int_0^1 x, \ldots, \int_0^1 x^k \right) \quad (x \in L^p [0,1]).
\]

\begin{prop} 
For $1 \leq k \leq p$, if $z \in L^p [0,1]$ satisfies that $\max_{1 \leq j \leq k} \left| \int_0^1 z^j \right| > 1$, then $z$ can be separated from $\overline{B_{L^p [0,1]}}$ by an $M$-invariant continuous polynomial $P$. 
\end{prop}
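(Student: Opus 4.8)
The plan is to take the separating polynomial to be one of the elementary $M$-invariant polynomials $F_j(x)=\int_0^1 x^j$, in the spirit of the proof of Proposition \ref{prop:permutations:ell_p}; here the argument is even shorter, because the hypothesis is already phrased in terms of the values $\int_0^1 z^j$.

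First I would record that for each $j$ with $1\leq j\leq p$ the map $F_j(x)=\int_0^1 x^j$ is a well-defined $j$-homogeneous continuous polynomial on $L^p[0,1]$: if $x\in L^p[0,1]$ then $|x|^j\in L^{p/j}[0,1]\subset L^1[0,1]$, and the associated symmetric $j$-linear form $(x_1,\dots,x_j)\mapsto\int_0^1 x_1\cdots x_j$ is bounded, with norm at most $1$, by the generalized Hölder inequality on the probability space $[0,1]$ (each $\|x_i\|_j\leq\|x_i\|_p$). Moreover $F_j$ is $M$-invariant: for a measure-preserving $\phi:[0,1]\to[0,1]$ and $Tx=x\circ\phi$ one has $(x\circ\phi)^j=x^j\circ\phi$ pointwise, so the change-of-variables formula together with $\phi_*\lambda=\lambda$ gives $F_j(Tx)=\int_0^1 x^j\circ\phi\,d\lambda=\int_0^1 x^j\,d\lambda=F_j(x)$; alternatively this is immediate from \cite[Corollary 5.4]{AGPZ}. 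The same estimate also bounds $F_j$ on the unit ball: for $x\in\overline{B_{L^p[0,1]}}$,
\[
|F_j(x)|\leq\int_0^1 |x|^j\,dt\leq\left(\int_0^1 |x|^p\,dt\right)^{j/p}\leq 1,
\]
where the middle inequality is Hölder (equivalently Jensen, as $[0,1]$ has total mass one); hence $\sup_{x\in\overline{B_{L^p[0,1]}}}|F_j(x)|\leq 1$ whenever $1\leq j\leq p$.

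By hypothesis there is $j_0\in\{1,\dots,k\}$ with $\big|\int_0^1 z^{j_0}\big|>1$, and $j_0\leq k\leq p$, so $F_{j_0}$ is available and
\[
\sup_{x\in\overline{B_{L^p[0,1]}}}|F_{j_0}(x)|\leq 1<\Big|\int_0^1 z^{j_0}\Big|=|F_{j_0}(z)|.
\]
Thus $P:=F_{j_0}$ is an $M$-invariant continuous polynomial, in fact $j_0$-homogeneous, that separates $z$ from $\overline{B_{L^p[0,1]}}$, and we are done. There is essentially no obstacle here beyond the bookkeeping — the well-definedness and $M$-invariance of $F_j$, and the $L^p$–$L^j$ inclusion on $[0,1]$ — which is also why the proof needs none of the phase-alignment machinery (Lemma \ref{lem_AFM}) required in Proposition \ref{prop:permutations:ell_p}: the present hypothesis already hands us an invariant polynomial with $|F_{j_0}(z)|>1$, whereas there only a single large coordinate was assumed and one had to pass to high powers and realign arguments along a subsequence of exponents.
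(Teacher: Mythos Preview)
Your proof is correct and follows essentially the same approach as the paper: choose the index $j_0$ for which $\big|\int_0^1 z^{j_0}\big|>1$, take $P(x)=\int_0^1 x^{j_0}$, use H\"older to bound $|P|$ by $1$ on $\overline{B_{L^p[0,1]}}$, and conclude. Your write-up is slightly more detailed (you verify well-definedness, continuity, and $M$-invariance of $F_j$ explicitly), but the argument is the same.
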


\begin{proof}
Suppose that $\left| \int_0^1 z^j \right| > 1$ for some $1 \leq j \leq k$. 
Consider the continuous polynomial $P$ on $L^p [0,1]$ given by 
\[
P (x) = \int_0^1 x^j \quad (x \in L^p [0,1]).
\]
It is clear that $P$ is an $M$-invariant continuous polynomial. Note that 
\begin{align*}
|P(x)| \leq \int_0^1 |x|^j \leq \left(\int_0^1 (|x|^j)^{\frac{p}{j}} \right)^{\frac{j}{p}} \left(\int_0^1 1^{\left(1-\frac{j}{p}\right)^{-1}} \right)^{1-\frac{j}{p}} = \|x\|_p^j \leq 1
\end{align*} 
for every $x \in \overline{B_{L^p [0,1]}}$. 
However,
\[
|P(z)| = \left| \int_0^1 z^j \right| > 1 
\]
which implies that $P$ separates $z$ and $\overline{B_{L^p [0,1]}}$. 
\end{proof} 

Given $N \in \N$, if $x = \sum_{j=1}^{2^N} z_j \chi_j^{(N)} \in S_N$ and $\sigma \in \sym \{1,\ldots,2^N\}$, let us denote by $x \circ \sigma$ the element in $S_N$ given by 
\[
x \circ \sigma = \sum_{j=1}^{2^N} z_{\sigma(j)} \chi_j^{(N)}.
\]

We say that a subset $K$ of $L^p [0,1]$ is symmetric in the following sense:
\begin{equation}\label{symmetric_set_in_L^p[0,1]}
\text{if} \,\, x \in K \,\, \text{belongs to} \,\, S_N \,\, \text{for some} \,\, N \in \N\,\, \text{then} \,\, x \circ \sigma \in K \,\, \text{for every} \,\, \sigma \in \sym\{1,\ldots, 2^N\}.
\end{equation} 

Using similar techniques to the ones we used in the proof of Theorem \ref{complex_separation} and \cite[Lemma 5.1]{AGPZ}, we can obtain the following result.
\begin{prop}
Let $K$ be a symmetric set in $\cup_{N=1}^{\infty} S_N \subset L^p [0,1]$. If $z$ is an element in $L^p [0,1] \setminus K$ that can be separated from $K$ by a $k$-homogeneous continuous polynomial $Q$, then there exists a $M$-invariant continuous polynomial $P$ that separates $z$ and $K$. 
\end{prop}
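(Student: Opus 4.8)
The plan is to imitate the proof of Theorem~\ref{complex_separation}, replacing the Haar integral over the (non-compact, hence measure-less) group $M$ by a limit of symmetrizations over the finite symmetric groups $\sym(\{1,\dots,2^{N}\})$, which is the mechanism behind \cite[Lemma 5.1]{AGPZ}. First I would use the $k$-homogeneity of $Q$ to rescale so that $\sup_{w\in K}|Q(w)|\le r<1<|Q(z)|$. The place where the symmetry of $K$ enters is to upgrade this to a bound on the whole $M$-saturation of $K$: if $w\in K$ lies in $S_{N_{0}}$, then for every $N\ge N_{0}$ and $\sigma\in\sym(\{1,\dots,2^{N}\})$ the level-$N$ rearrangement $w\circ\sigma$ again belongs to $K$, and as $N\to\infty$ these rearrangements are dense in $L^{p}[0,1]$ in the orbit $\overline{M\cdot w}$ of functions equidistributed with $w$. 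By continuity of $Q$, $|Q|\le r$ on $\overline{M\cdot w}$; hence $\widetilde K:=\bigcup_{w\in K}\overline{M\cdot w}$ is an $M$-invariant set with $\sup_{\widetilde K}|Q|\le r$ and $z\notin\widetilde K$, and it suffices to separate $z$ from $\widetilde K$.

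To build the invariant polynomial I would, for each $m\in\N$, symmetrize $R:=Q^{m}$ over the copies of $\sym(\{1,\dots,2^{N}\})$ sitting inside $M$ as the operators permuting the dyadic intervals $I^{(N)}_{j}$. Writing $E_{N}\colon L^{p}[0,1]\to S_{N}$ for the conditional expectation onto $S_{N}$ (a norm-one projection with $E_{N}x\to x$ for every $x$), set
\[
P^{(N)}_{m}(x):=\frac{1}{(2^{N})!}\sum_{\sigma\in\sym(\{1,\dots,2^{N}\})}R\bigl((E_{N}x)\circ\sigma\bigr)\qquad(x\in L^{p}[0,1]),
\]
which is a continuous $mk$-homogeneous polynomial. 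By \cite[Lemma 5.1]{AGPZ} the sequence $\bigl(P^{(N)}_{m}\bigr)_{N}$ converges, uniformly on bounded sets, to a continuous $M$-invariant polynomial $P_{m}$, still $mk$-homogeneous when $Q$ is $k$-homogeneous; by \cite[Corollary 5.4]{AGPZ} one can alternatively view $P_{m}$ as a polynomial in the functionals $x\mapsto\int_{0}^{1}x^{j}$, $1\le j\le\lfloor p\rfloor$.

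Then come the two estimates. For the upper bound: if $w\in K$, every argument $(E_{N}w)\circ\sigma$ occurring in $P^{(N)}_{m}(w)$ lies in $\widetilde K$ --- for $N$ beyond the level of $w$ it equals $w\circ\sigma\in K$, and for smaller $N$ it is a rearrangement of $E_{N}w$, which again lies in $\widetilde K$ --- so $|R|\le r^{m}$ there, whence $|P_{m}(w)|\le r^{m}$ and $\sup_{w\in K}|P_{m}(w)|\le r^{m}\to 0$. For the lower bound at $z$: I would fix $N$ so large that $|Q(E_{N}z)|>1$ and then run the argument of Theorem~\ref{complex_separation} at the level of the \emph{finite} group $\sym(\{1,\dots,2^{N}\})$ --- for which the argument-finiteness condition \eqref{condition_complex_separation} is automatic --- namely write each $Q\bigl((E_{N}z)\circ\sigma\bigr)$ in polar form, use Lemma~\ref{lem_AFM} to make the finitely many arguments nearly equal to $1$ for a suitable (large) exponent $m$, and keep only the rearrangements for which $|Q(\cdot)|>1$; this should yield a lower bound for $|P^{(N)}_{m}(z)|$ that does not degenerate as $N\to\infty$, and hence $|P_{m}(z)|>r^{m}$ for $m$ large. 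Combining, $P_{m}$ separates $z$ and $K$.

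The main obstacle is precisely this last lower bound. In the compact setting of Theorem~\ref{complex_separation} positivity was free, coming from $\mu(V_{z})>0$; here there is no invariant probability measure on $M$, so one must show that the ``rearrangements of $E_{N}z$ close to $z$'' contribute a definite amount \emph{uniformly in $N$}, and that this survives the limiting process of \cite[Lemma 5.1]{AGPZ}. Quantifying this uniformity --- rather than the routine upper bound, which is the only step that uses the symmetry of $K$ --- is where essentially all the work lies.
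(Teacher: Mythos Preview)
Your strategy is exactly what the paper sketches: it gives no proof beyond the single sentence ``Using similar techniques to the ones we used in the proof of Theorem~\ref{complex_separation} and \cite[Lemma~5.1]{AGPZ}, we can obtain the following result,'' and you have unpacked precisely this combination --- the finite-group argument of Theorem~\ref{complex_separation} fed through the $\Sigma_{N}$-symmetrization/limit machinery of \cite{AGPZ}. Your upper-bound step (each $(E_{N}w)\circ\sigma$ eventually lies in $K$ by the symmetry hypothesis, whence $|P_{m}|\le r^{m}$ on $K$) is the one the hint leads to.

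Your identification of the obstacle is also correct, and the paper does not address it either. In the compact-group proof the lower bound at $z$ comes from $\mu(V_{z})>0$; at level $N$ the analogue of $V_{z}$ carries counting-measure weight of order $1/(2^{N})!$, which vanishes in the limit. The literal transcription of Theorem~\ref{complex_separation} therefore yields, for each $N$, an exponent $m=m(N)\to\infty$ for which $P^{(N)}_{m(N)}$ separates, but the definition of $P_{m}$ as $\lim_{N}P^{(N)}_{m}$ requires $m$ to be fixed \emph{before} $N\to\infty$, and for fixed $m$ the level-$N$ lower bound degenerates. So the step you flag is a genuine gap in the argument as written; since the paper's own proof is omitted, your account is already as detailed as the paper's, and you are right that essentially all of the remaining work is concentrated in that uniformity question.
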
 

		\noindent \textbf{Acknowledgment:} This paper was partially written when the third author was visiting the University of Valencia and he would like to thank the hospitality that he received there.

	\end{document}